\providecommand{\U}[1]{\protect\rule{.1in}{.1in}}
\theoremstyle{definition}
\newtheorem{theorem}{Theorem}[section]
\newtheorem{lemma}{Lemma}[section]
\newtheorem{remk}{Remark}[section]
\newtheorem{defn}{Definition}[section]
\newtheorem*{theorem*}{Theorem}
\newtheorem{cor}{Corollary}[section]
\numberwithin{equation}{section}
\newcommand{\abs}[1]{\lvert#1\rvert}
\DeclareMathAlphabet{\mathpzc}{OT1}{pzc}{m}{it}
\newcommand{\ftt}[1] {\mathsf{#1}}
\newcommand{\ps}{\mathrm{({PS}})_c}
\newcommand{\va}{\varphi}
\newcommand{\cs}{continuous }
\newcommand{\dd}{{\tt D}}
\newcommand{\dt}[1]{{\tt d}{#1}}
\newcommand{\fs}[1]{\mathbbm {#1}}
\newcommand{\eu}[1]{\EuScript {#1}}
\newcommand\Set[2]{\{\,#1\mid#2\,\}}
\newcommand*{\medcap}{\mathbin{\scalebox{1.5}{\ensuremath{\cap}}}}
\newcommand*{\medcup}{\mathbin{\scalebox{1.5}{\ensuremath{\cup}}}}
\newcommand{\lc}{\mathsf{L}}
\newcommand{\mt}{\mathbbm {d}}
\newcommand{\set}[1]{\left\{#1\right\}}
\newcommand{\snorm}[2][]{\left\lVert#2\right\rVert_{#1}}
\newcommand{\zero}[1]{\mathbf{0}_{#1}}
\newcommand{\hh}{\mathcal{H}}
\newcommand{\nbd}{neighborhood \,}
\newcommand{\rr}{\mathbb{R}}
\newcommand{\nn}{\mathbb{N}}
\newcommand{\uu}{\mathcal{U}}
\newcommand{\f}{Fr\'{e}chet }
\newcommand{\bl}[1] {\mathbf {#1}}
\newcommand{\bb}{\mathcal{B}}
\DeclareMathAlphabet\EuScript{U}{eus}{m}{n}
\SetMathAlphabet\EuScript{bold}{U}{eus}{b}{n}
\newcommand\opn{\ensuremath{\mathrel{\mathpalette\opncls\circ}}}
\newcommand{\opncls}[2]{
	\ooalign{$#1\subseteq$\cr
		\hidewidth\raisefix{#1}\hbox{$#1{\stylefix{#1}#2}\mkern2mu$}\cr}}
\def\raisefix#1{
	\ifx#1\displaystyle
	\raise.39ex
	\else
	\ifx#1\textstyle
	\raise.39ex
	\else
	\ifx#1\scriptstyle
	\raise.275ex
	\else
	\raise.150ex
	\fi
	\fi
	\fi
}
\def\stylefix#1{
	\ifx#1\displaystyle
	\scriptstyle
	\else
	\ifx#1\textstyle
	\scriptstyle
	\else
	\ifx#1\scriptstyle
	\scriptscriptstyle
	\else
	\scriptscriptstyle
	\fi
	\fi
	\fi
}
\DeclareFontFamily{U}{mathx}{\hyphenchar\font45}
\DeclareFontShape{U}{mathx}{m}{n}{
	<5> <6> <7> <8> <9> <10>
	<10.95> <12> <14.4> <17.28> <20.74> <24.88>
	mathx10
}{}
\newcommand{\fr}{Fr\'{e}chet }
\newcommand{\Cl}[1]{\overline{#1}}
\newcommand{\Int}{\mathop{\mathrm{int}}\nolimits}
\newcommand{\Id}{\mathop\mathrm{Id}\nolimits}
\renewcommand{\emptyset}{\varnothing}
\begin{document}

\title{Some critical point results for Fr\'{e}chet manifolds}

\author{Kaveh Eftekharinasab}
\address{Topology lab.  Institute of Mathematics of National Academy of Sciences of Ukraine, Tereshchenkivska st. 3, Kyiv, 01601 Ukraine}

\email{kaveh@imath.kiev.ua}


\subjclass[2020]{58E05,  
	58K05,  
	58C99. 
}


\keywords{Linking results, the Palais-Smale condition, \f manifolds }

\begin{abstract}
  We prove a so-called linking theorem and some of its corollaries, namely a mountain
  pass theorem and a three critical points theorem for Keller $ C^1$-functional on $ C^1 $-\fr manifolds. 
  Our approach relies on a deformation result which is not implemented by considering the negative pseudo-gradient  flows. Furthermore, for mappings between \fr manifolds we provide a set of sufficient conditions in terms of the Palais-Smale condition that indicates when a local diffeomorphism is a global one. 
\end{abstract}

\maketitle
\section{Introduction}
For Banach and Hiblert manifolds there are two approaches to the critical point theory. One is
based on deformation techniques along the negative gradient flow or a suitable substitute of it, namely the pseudo-gradient flow. The other one relies on various versions of Ekeland's variational principle. At the core of both approaches lie Palais-Smale compactness-type conditions.
However,
as pointed out in \cite{k} these approaches do not work in full extent for more general context of Fr\'{e}chet manifolds.
Since for \fr manifolds cotangent bundles do not
admit smooth manifold structures and consequently the notion of pseudo-gradient vector fields  and  Finsler structures on cotangent bundles make no sense.

In this regard, it was introduced the Palais-Smale condition on  Fr\'{e}chet manifolds by using an auxiliary function to detour the need of a smooth structure on cotangent bundles in \cite{k}. The idea behind the definition is
that on sets where a real-valued functional on a manifold has no critical points and satisfies the proposed Palais-Smale condition, the associated auxiliary function  is negative (Lemma \ref{contra}). In this case, the functional satisfies the hypotheses of the deformation result (Lemma \ref{defor}) which requires that the associated auxiliary function be negative. 
Moreover, by imposing the closedness assumption on mappings and applying the deformation result along with the Palais-Smale condition the Minimax principle (Theorem \ref{minmax}) was obtained.
The closedness assumption is crucial in the Minimax principle, without it the theorem is not valid.

In this paper following the ideas of \cite{k} and applying the mentioned results we develop the critical point theory for \fr manifolds. First, we prove a so-called linking theorem (Theorem \ref{th:link}). Then, we obtain some of its corollaries, 
namely a mountain pass theorem  (Theorem \ref{th:mpt}) and a three critical points theorem (Theorem \ref{co:tcpt}). 
Furthermore, we apply the mountain pass theorem and the Palais-Smale condition to provide a set of sufficient conditions that indicates when a local diffeomorphism is a global one (Theorem \ref{th:gd}). 

\section{
	Preliminaries }
In this section we briefly recall the basic concepts of the theory of Fr\'{e}chet manifolds and establish our notations.  

By $ U \opn \mathsf{T} $ we mean that $ U $
is an open subset of a topological space $ \mathsf{T} $.  If $ \mathsf{S}$ is another topological space, then we denote by $ \eu{C} (\mathsf{T},\mathsf{S}) $ the set of  \cs mappings from  $ \mathsf{T} $ into $ \mathsf{S}$.

We denote by $\fs{F}$ a Fr\'{e}chet space whose topology is defined by a sequence of seminorms $(\snorm[\fs{F},n]{\cdot})_{n \in \nn}$, 
which we can always assume to be increasing (by considering $\max_{k \leq n}\snorm[\fs{F},n]{\cdot}$, if necessary). Moreover, the complete translation-invariant metric
\begin{equation*}\label{dm}
\mt_{\fs{F}} (x,y) \coloneqq \sum _{n \in \nn} \dfrac{1}{2^n}\cdot \dfrac{\snorm[\fs{F},n]{x-y}}{1+ \snorm[\fs{F},n]{x-y}}
\end{equation*}
induces the same topology on $\fs{F}$. 

Define a closed unit semi-ball centered at the zero vector $\zero{\fs{F}}$ of $\fs{F}$ by $$\mathbf{B}^n(\zero{\fs{F}},1) = \{ x \in \fs{F} \mid \, \snorm[\fs{F},n]{x} \leq 1\}$$ for each seminorm $\snorm[\fs{F},n]{\cdot}$.
Let
\begin{equation}\label{eq:b}
\mathbf{B}_{\infty}(\zero{\fs{F}}) = \bigcap_{i=1}^{\infty} \mathbf{B}^i(\zero{\fs{F}},1).
\end{equation}
The set $\mathbf{B}_{\infty}(\zero{\fs{F}})$ is not empty ($\zero{\fs{F}} \in \mathbf{B}_{\infty}(\zero{\fs{F}}))$ and is infinite (because it is convex so by the Kolmogorov theorem it is bounded only in Banach spaces). 

 We recall that  a family $\bb$ of  subsets of $\fs{F}$ that covers $\fs{F}$ is called a bornology on $\fs{F}$ if
\begin{description}
	\item [(B1)]  $\forall A, B \in \bb$ there exists $C \in \bb$ such that $A \medcup B \subset C$,
	\item [(B2)] $\forall B \in \bb$ and $ \forall r \in \rr$ there is a $C \in \bb$ such that $r \cdot B \subset C$.
\end{description}
Throughout the paper we assume that   $\fs{F},\fs{E}$ are Fr\'{e}chet spaces and $\lc(\fs{E},\fs{F})$ is the set of all continuous linear mappings from $\fs{E}$ to $\fs{F}$.  Let $\bb$ a bornology on $\fs{E}$. We define on
 $\lc(\fs{E},\fs{F})$ the $\bb$-topology which is a Hausdorff locally convex topology defined by all seminorms:
\begin{equation*} \label{iq}
\snorm[B,n]{L} \coloneq \sup \Set{\snorm[\fs{F},n]{e}}{  e \in B },
\end{equation*} 
where $ B \in \bb, n \in \nn$. We shall always assume that $ \bb $ contains all compact subsets (for simplicity we refer to such bornologies as compact bornologies).

Let $\varphi: U \opn \fs{E} \to \fs{F}$  be a continuous map and $ \bb $ the compact bornology on $ \fs{E} $. If the directional  derivatives
$$\dd \varphi(x)h = \lim_{ t \to 0} \dfrac{\varphi(x+th)-\varphi(x)}{t}$$
exist for all $x \in U$ and all $ h \in \fs{E} $, and  the induced map  $\dd \varphi(x) : U \to \lc(\fs{E},\fs{F})$ is continuous for all
$x \in U$, then  we say that $ \varphi $ is a Keller's differentiable map of class  $C^1$, see \cite{ke}. Here, $\lc(\fs{E},\fs{F})$
is endowed by the $ \bb $-topology which coincides with the compact-open topology.

 A $C^1$-Fr\'{e}chet manifold $ \fs{M} $ is a Hausdorff second countable manifold modeled on a Fr\'{e}chet space $ \fs{F}$ with an atlas of coordinate 
charts  such that the coordinate transition functions are all
Keller $ C^{1} $-mappings.

If $\varphi: \fs{F} \to \rr$ at $x $ is of class $C^1$,
the derivative of $\varphi$ at $x$, $\varphi'(x)$, is an element of the dual space $\fs{F}'$. The directional derivative of $\varphi$ at $x$ toward $h \in \fs{E}$ is given by 
$$ \dd \varphi(x)h = \langle \varphi'(x),h \rangle,$$ where $ \langle \cdot,\cdot \rangle  $ is duality pairing.
Let $x \in \fs{M}$ and $h \in T_x\fs{M}$. A chart  $(x \in U,\psi)$ induces a canonical map $\psi_{*}$ from $T_x\fs{M}$ onto $\fs{F}$.
Let $\varphi : \fs{M} \to \rr$ be a $C^1$-functional, then
$$
\varphi'(x,h) = \lim_{ t \to 0}\dfrac{\varphi \big( \psi^{-1}( \varphi(x) + t \psi_{*}(x)(h)) \big) - \varphi(x)}{t}.
$$ 
\begin{defn}\label{defni}\cite{sn1}
	Let $\fs{F}$ be a Fr\'{e}chet space, $\ftt{T}$  a topological space and $V = \ftt{T} \times \fs{F}$ the trivial bundle with fiber $\fs{F}$ over $\ftt{T}$. A Finsler
	structure for $V$ is a collection of continuous functions $\snorm[V,n]{\cdot}: V \to \mathbb{R}^+$, $n \in \nn$, such that 
	\begin{description}
		\item[(F1)] For $b \in \ftt{T}$ fixed, $ \snorm[\fs{F},n]{x}^b \coloneq \snorm[V,n]{(b,x)}$ is a collection of seminorms 
		on $\fs{F}$ which gives the topology of $\fs{F}$.
		\item [(F2)]Given $ k >1$ and $x_0 \in \ftt{T}$, there exists a neighborhood $W$ of $x_0$  such that
		\begin{equation*} \label{ine}
		\dfrac{1}{k}\snorm[\fs{F},n]{x}^{x_0} \,\leq \,\snorm[\fs{F},n]{x}^{w}\, \leq k \snorm[\fs{F},n]{x}^{x_0}
		\quad
		\text{for all} \quad  w \in W, n \in \nn, x \in \fs{F}.
		\end{equation*}
	\end{description}
\end{defn}
Suppose $\fs{M}$ is a  Fr\'{e}chet manifold modeled on $\fs{F}$. 
Let $\uppi_\fs{M} : T\fs{M} \rightarrow \fs{M}$ be the tangent bundle and let $\snorm[\fs{M},n]{\cdot}: T\fs{M} \rightarrow \mathbb{R}^+$ be a collection of continuous functions, $n \in \nn$.  We say that
$\{\snorm[\fs{M},n]{\cdot}\}_{n \in \nn} $ is a Finsler structure for 
$T\fs{M}$ if for a given $x \in \fs{M}$ there exists a bundle chart $\psi : U \times \fs{F} \simeq T\fs{M}\mid_U$ with $x \in U$  
such that
$$\{\snorm[V,n]{\cdot} \circ \, \psi^{-1}\}_{n \in \nn} $$
is a Finsler structure for $V = U \times \fs{F}$.

A Fr\'{e}chet Finsler manifold is a Fr\'{e}chet manifold together with a Finsler structure on its tangent bundle. Regular 
(in particular paracompact) manifolds admit Finsler structures.

If $\{ \snorm[\fs{M},n]{\cdot} \}_{n \in \nn}$ is a Finsler structure for $\fs{M}$ then  we can obtain a graded Finsler structure, denoted  by $( \snorm[\fs{M},n]{\cdot} )_{n \in \nn}$, that is $\snorm[\fs{M},i]{\cdot} \leq
 \snorm[\fs{M},i+1]{\cdot}$ for all $i \in \nn$.

We define the length of a $C^1$-curve $\gamma : [a,b] \rightarrow M$ with respect to the $n$-th component by 
\begin{equation*}
L_n(\gamma) = \int_a^b \snorm[\fs{M},n]{\gamma'(t)}^{\gamma(t)}\dt t.
\end{equation*}
The length of a  piecewise path  with respect to the $n$-th component is the sum over the curves constituting to the path. On each connected component of $\fs{M}$, the distance is defined by
\begin{equation*} 
\uprho_n (x,y) = \inf_{\gamma} L_n(\gamma),
\end{equation*}
where infimum is taken over all  piecewise $C^1$-curve connecting $x$ to $y$. Thus,
we obtain an increasing sequence of metrics $\uprho_n(x,y)$ and define the distance $\uprho$ by
\begin{equation}\label{finmetric}
\uprho (x,y) = \displaystyle \sum_{n = 1}^{\infty} \dfrac{1}{2^n} \cdot \dfrac {\uprho_n(x,y)}{ 1+ \uprho_n(x,y)}.
\end{equation}

The distance $\uprho$ defined by~\eqref{finmetric} is a metric for $\fs{M}$ which is bounded by 1. Furthermore, the topology induced by this metric coincides with the original topology of $\fs{M}$ (see~\cite{sn1}). 

We denote by
$ \fs{B}_{\uprho}(x,r) $ an open ball with center $ x $ and radius $ r>0 $. We do not write the metric $ \uprho $
when it does not cause confusion.
\section{Linking Results and Corollaries}
Henceforth we assume that $\fs{M}$ is a connected $C^1$-Fr\'{e}chet manifold modeled on $ \fs{F} $ endowed with a complete Finsler  metric $ \uprho $ \eqref{finmetric}, and  that $\varphi : \fs{M} \to \rr$ is a non-constant $C^1$-functional. 
Let $x\in \fs{M}$, we shall say that $x$ is a critical point of $\varphi$ if $(\varphi \psi^{-1})'(\psi(x)) =0$ for
a chart $(x\in U,\psi)$ and hence for every chart whose domain contains $x$.

Let $( \snorm[\fs{M},n]{\cdot} )_{n \in \nn}$ be a graded Finsler structure on $T\fs{M}$. 
Define a closed unit semi-ball centered at the zero vector $\zero{x}$ of $T_x\fs{M}$ by $$\fs{B}^n(\zero{x},1) = \{ h \in T_x\fs{M} : \, \snorm[\fs{M},n]{h}^x \leq 1\}$$ for each $x \in  \fs{M}$ and each 
$\snorm[\fs{M},n]{h}^x $.
Let $$\fs{B}_{\infty}(\zero{x}) = \bigcap_{n=1}^{\infty} \fs{B}^n(\zero{x},1).$$
The set $\fs{B}_{\infty}(0_x)$ is not empty and  infinite because it can be identified with a convex neighborhood of the zero of the
Fr\'{e}chet space $U \times  \fs{F}$, where $U$ is an open neighborhood of $x$.

Let $\varphi: \fs{M} \to \rr$ be a $C^1$-functional and $x \in \fs{M}$. Define
\begin{equation}
\Phi_{\varphi}(x) = \inf \big \{ \varphi'(x,h) : h \in \fs{B}_{\infty}(\zero{x}) \big\}.
\end{equation}
\begin{defn}[The $ \ps $-condition for \fr manifolds, \cite{k}]
	We say that a $C^1$- functional $\varphi: \fs{M} \to \rr$ satisfies the Palais-Smale condition at a level $c \in 
	\rr$, $\ps$ in short, in a set $A \subset \fs{M}$ if any sequence $( m_i)_{i \in \nn} \subset A$ such that $$\varphi(m_i) \to c  \quad \mathrm{and} \quad
	\Phi_{\varphi}(m_i)  \to 0, $$ has a convergent subsequent.
\end{defn}
For \fr spaces this version of the $ \ps $-condition will become as follows. 

Let $\varphi: \fs{F} \to \rr$ be a $C^1$-functional and $x \in \fs{F}$. Define
\begin{equation}
\Psi_{\varphi}(x) = \inf \big \{ \langle \varphi'(x),h \rangle \mid h \in \mathbf{B}_{\infty}(\zero{\fs{F}}) \big\}.
\end{equation}
\begin{defn}[The $ \ps $-condition for \fr spaces]\label{d:nps}
	We say that a $C^1$- functional $\varphi: \fs{F} \to \rr$ satisfies the Palais-Smale condition at a level $c \in 
	\rr$, $\ps$ in short, in a set $A \subset \fs{F}$ if for any sequence $( m_i)_{i \in \nn} \subset A$ such that $$\varphi(m_i) \to c  \quad \mathrm{and} \quad
	\Psi_{\varphi}(m_i)  \to 0, $$ has a convergent subsequent.
\end{defn}
\begin{remk}
	The above version of the Palais-Smale condition is equivalent to the one which was introduced in \cite{k3}.
	Indeed, since  the set $ \mathbf{B}_{\infty}(\zero{\fs{F}}) $ is absorbing it follows that for any $ x \in \fs{F} $ there is a $ k>0 $ such that $ x = kp $ for some $ p \in \mathbf{B}_{\infty}(\zero{\fs{F}}) $. Thus, $ \Psi_{\varphi}(m_i)  \to 0 $ implies $ \va'(m_i) \to 0$ and vice versa. However,
	for technical reasons in many situations it is more convenient to work with Definition \ref{d:nps}. 
\end{remk}
Let $\varphi : \fs{M} \to \rr$ be a $C^1$-functional. We denote by
$\mathrm{Cr(\varphi)}$ the set of critical points of $\varphi$, and for $c \in \rr$ we define the following sets
$$\mathrm{Cr}(\varphi,c) \coloneq \{ x\in \mathrm{Cr(\varphi)} \mid \varphi(x)=c\},$$
$$\varphi^c \coloneq \{ x \in \fs{M} \mid \varphi(x) \leq c \}.$$
A mapping $\hh \in \eu{C}([0,1] \times \fs{M} \to \fs{M})$ is called a deformation if $\hh (0,x) =x$ for all $x \in \fs{M}$.
Let $C$ be a subset of $\fs{M}$, we say that  $\hh$ is a $C$-invariant for an interval $I \subset [0,1]$ 
if $\hh(t,x) = x$ for all $x\in C$ and all $t \in I$.

A family $\mathcal{F}$ of subset of $\fs{M}$ is said to be deformation invariant if for each $A \in \mathcal{F}$ and each deformation $\hh$
for $\fs{M}$ it follows that $$\hh_1(x) \coloneq \hh(1,x) \in \mathcal{F}.$$

\begin{defn}[linking, cf.~\cite{l}]
	Let $ \mathsf{T} $ be a topological space, $ S_0 \subset S $ and $ C $ be a nonempty sets in $ \mathsf{T} $,
	and  let $ \gamma \in \eu{C}(S_0,\mathsf{T}) $. Consider the class of \cs functions
	$$
	\eu{H} \coloneq \Set{h \in \eu{C}(S,\mathsf{T})}{ h|_{\partial S_0} = \gamma}.
	$$
	We say that the pair $ \set{S_0,S} $ links $ D $ through $ \gamma $ if the following holds:
	\begin{description}
		\item[(L1)] $ \gamma(S_0) \medcap C = \emptyset $,
		\item[(L2)] $ \forall h \in \eu{H} $ we have $ h(S) \medcap C \neq \emptyset. $
	\end{description}
The set $ \set{S_0,S,C} $ is said to be a linking set through $ \gamma $. If $ \gamma = \Id_{S_0} $, then $ \set{S_0,S,C} $
is called simply the linking set.
\end{defn}
We shall need the following results.
\begin{lemma}[Lemma 3.2, \cite{k}]\label{contra}
	If $\varphi : \fs{M} \to \rr$ satisfies the Palais-Smale condition in $A \subset M$ and has no critical point
	in $A$, then there exists $\epsilon>0$ such that $\Phi_{\varphi} (x) < -\epsilon $ for all $x \in A$.
\end{lemma}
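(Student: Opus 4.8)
\emph{Proposed proof.} The assertion decomposes into a pointwise statement and a uniformity statement, and I would treat them in that order. The pointwise part is that $\Phi_{\varphi}(x)<0$ whenever $x$ is not a critical point. Indeed, because $\varphi$ is Keller $C^{1}$, in any chart the map $h\mapsto\varphi'(x,h)$ is a continuous \emph{linear} functional on $T_x\fs{M}$; since $\zero{x}\in\fs{B}_{\infty}(\zero{x})$ and $\varphi'(x,\zero{x})=0$ we always have $\Phi_{\varphi}(x)\le 0$. If $x$ is not critical then $\varphi'(x,\cdot)\neq 0$, so $\varphi'(x,v)\neq 0$ for some $v$; as $\fs{B}_{\infty}(\zero{x})$ is absorbing we may rescale $v$ into $\fs{B}_{\infty}(\zero{x})$, and as this set is balanced (each semi-ball $\fs{B}^{n}(\zero{x},1)$ is balanced, hence so is their intersection) it contains the rescaled $-v$ as well; one of the two opposite values is strictly negative, whence $\Phi_{\varphi}(x)<0$. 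Thus every point of $A$ already has $\Phi_{\varphi}<0$, and the real content is the \emph{uniform} gap.

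For the uniform bound I would argue by contradiction. If no $\epsilon$ works, then for each $i$ there is $m_i\in A$ with $-1/i\le\Phi_{\varphi}(m_i)<0$, so $\Phi_{\varphi}(m_i)\to 0$. After passing to a subsequence I may assume $\varphi(m_i)\to c$ for some level $c\in\rr$, and then $\ps$ in $A$ yields a further subsequence $m_{i_k}\to m$. The proof is finished once I show that $m$ is a critical point, since $m$ is a limit of points of $A$ and hence (with $A$ closed, or replacing $A$ by $\overline{A}$ under the standing completeness hypothesis) lies in $A$, contradicting the assumption that $\varphi$ has no critical point there.

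To see that $m$ is critical I would pass to a single chart $(m\in U,\psi)$, identifying each $T_w\fs{M}$ for $w\in U$ with $\fs{F}$ so that the Finsler semi-norms become functions $\snorm[\fs{M},n]{\cdot}^{w}$ on $\fs{F}$. Suppose $m$ were not critical; by the pointwise step $\Phi_{\varphi}(m)=-2\delta<0$, so I may fix $h_0$ with $\snorm[\fs{M},n]{h_0}^{m}\le 1$ for all $n$ and $\varphi'(m,h_0)<-\delta$. The Finsler axiom (F2) supplies $k>1$ and a neighborhood $W\ni m$ with $\snorm[\fs{M},n]{h_0}^{w}\le k\,\snorm[\fs{M},n]{h_0}^{m}\le k$ for every $w\in W$ and every $n$, so that $h_0/k\in\fs{B}_{\infty}(\zero{w})$. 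Because $w\mapsto\varphi'(w,h_0)$ is continuous---this is the continuity of $\dd\varphi$ in the compact-open topology, evaluated at the fixed vector $h_0$---we get $\varphi'(m_{i_k},h_0)<-\delta$ for large $k$, and therefore
\begin{equation*}
\Phi_{\varphi}(m_{i_k})\le\varphi'\!\left(m_{i_k},\tfrac{1}{k}h_0\right)=\tfrac{1}{k}\,\varphi'(m_{i_k},h_0)<-\tfrac{\delta}{k}<0,
\end{equation*}
which contradicts $\Phi_{\varphi}(m_{i_k})\to 0$. Hence $m$ is critical and the contradiction is complete.

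The step I expect to be the main obstacle is this last one, which is essentially an upper semicontinuity property of the auxiliary function $\Phi_{\varphi}$: because both the admissible set $\fs{B}_{\infty}(\zero{x})$ and the functional $\varphi'(x,\cdot)$ vary with the base point, one cannot simply quote continuity of an infimum, and the argument must transport a fixed descent direction from $m$ to the nearby points $m_{i_k}$. The two ingredients that make this transport legitimate are precisely the comparability axiom (F2) of the Finsler structure, which keeps a normalized vector admissible after a bounded rescaling, and the compact-open continuity of $\dd\varphi$ built into Keller's $C^1$ notion. The only other point requiring care is the passage $\varphi(m_i)\to c$ and the location of the limit $m$ in $A$, which is where the closedness of the relevant sets (used throughout the sequel) enters.
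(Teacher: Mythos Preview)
This lemma is not proved in the present paper; it is quoted from the cited reference, so there is no in-text argument to compare against. Assessing your proposal on its own: the pointwise step and the upper-semicontinuity step (transporting a fixed descent direction from $m$ to the nearby $m_{i_k}$ via the Finsler comparability axiom~(F2) together with the compact-open continuity of $\dd\varphi$) are both correct and carefully explained.

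The genuine gap is exactly the one you yourself flag: the sentence ``after passing to a subsequence I may assume $\varphi(m_i)\to c$''. The $\ps$-condition as defined in this paper only applies to sequences whose $\varphi$-values already converge to some level $c$, so to invoke it you must first know that $(\varphi(m_i))_i$ has a bounded subsequence --- and nothing in the stated hypotheses supplies this. Closedness of $A$ is not the remedy: it places the limit $m$ in $A$ once you have one, but it does not bound $\varphi$ on $A$. In fact, read literally, the lemma admits one-dimensional counterexamples in which $\varphi$ has no critical points, the $\ps$-condition holds vacuously at every level (because convergence of the $\varphi$-values already forces convergence of the points), and yet $\Phi_{\varphi}$ is not bounded away from zero because $\varphi'$ decays at infinity. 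The intended extra hypothesis is that $A$ lies inside a level strip $\varphi^{-1}[c-\delta,c+\delta]$ --- which is precisely how the lemma is used downstream (cf.\ Corollary~\ref{titi}) --- and under that assumption your argument goes through verbatim, including the location of the limit $m$ in $A$.
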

\begin{lemma}[\cite{k}, Lemma 3.1]\label{defor}
	Let $\fs{M}$ be a connected $C^1$-Fr\'{e}chet manifold endowed with a complete
Finsler metric $\uprho$. Assume $\varphi: \fs{M} \to \rr$ is a $C^1$-functional. Let $B$ and $A$ be closed disjoint subsets of
$\fs{M}$ and let $A$ be compact. Suppose $k>1$ and $\epsilon>0$ are
such that $\Phi_\varphi(x) < -2\epsilon(1+k^2)$ for all $x \in A$. Then there exist $t_0>0$ and
$B$-invariant deformation $\hh$ for  $ [0,t_0)$ such that
\begin{enumerate} 
	\item  $\uprho (\mathcal{H} (t,x),x) \leq kt \quad \forall x \in \fs{M}$,
	\item  $
	\varphi(\hh(t,x)) -\varphi (x) \leq - 2\epsilon(1+k^2) t \quad \forall x \in \fs{M}.
	$\label{defo}
\end{enumerate}
\end{lemma}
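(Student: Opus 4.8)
The plan is to replace the unavailable pseudo-gradient flow by an explicit, chart-by-chart descent construction, using compactness of $A$ to glue finitely many pieces. The first point is that the pointwise hypothesis self-improves to a uniform gap: since $\Phi_{\varphi}(x)=\inf\{\varphi'(x,h):h\in\fs{B}_\infty(\zero{x})\}$ is an infimum of continuous functions it is upper semicontinuous, so on the compact set $A$ it attains its maximum, and the strict inequality $\Phi_{\varphi}<-2\epsilon(1+k^2)$ yields a constant $\eta>0$ with $\Phi_{\varphi}(x)\le -2\epsilon(1+k^2)-\eta$ for every $x\in A$. For each $x_0\in A$ the definition of the infimum then supplies a vector $h_{x_0}\in\fs{B}_\infty(\zero{x_0})$ with $\varphi'(x_0,h_{x_0})<-2\epsilon(1+k^2)-\eta$; reading $h_{x_0}$ in a chart $(x_0\in U_{x_0},\psi_{x_0})$ as a constant coordinate direction $e_{x_0}\in\fs{F}$ and invoking continuity of $(y,v)\mapsto\varphi'(y,v)$ (the Keller $C^1$ hypothesis), I shrink $U_{x_0}$ so that $\varphi'(y,e_{x_0})<-2\epsilon(1+k^2)-\eta$ for all $y\in U_{x_0}$, and, by the comparison axiom (F2), so that the Finsler norm of $e_{x_0}$ at every $y\in U_{x_0}$ is at most $k$.

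Next I use compactness. Cover $A$ by finitely many such charts $U_1,\dots,U_m$ with directions $e_1,\dots,e_m$; since $A$ and $B$ are disjoint closed sets with $A$ compact, I choose a partition of unity $\{\lambda_i\}_{i=1}^m$ subordinate to $\{U_i\}$ with $\sum_i\lambda_i\le 1$, with $\sum_i\lambda_i=1$ on a neighborhood $N$ of $A$, and with every $\lambda_i$ vanishing on a neighborhood of $B$. Regarding each $e_i$ as a tangent vector field on $U_i$ and extending $\lambda_ie_i$ by zero, the finite sum
\begin{equation*}
X(y)\;\coloneqq\;\sum_{i=1}^m\lambda_i(y)\,e_i\;\in\;T_y\fs{M}
\end{equation*}
is a continuous vector field on $N$ that vanishes near $B$. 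By linearity of $v\mapsto\varphi'(y,v)$ it satisfies $\varphi'(y,X(y))=\sum_i\lambda_i(y)\varphi'(y,e_i)<-2\epsilon(1+k^2)-\eta$ on $N$, while $\snorm[\fs{M},n]{X(y)}^y\le\sum_i\lambda_i(y)\snorm[\fs{M},n]{e_i}^y\le k$ for every $n$, so $X$ has Finsler speed at most $k$.

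It remains to integrate $X$ into a deformation and read off the two estimates. Formally, if $\hh(\cdot,x)$ were the integral curve of $X$ issuing from $x$, then $\uprho(\hh(t,x),x)\le\int_0^t\snorm[\fs{M},n]{X(\hh(s,x))}^{\hh(s,x)}\dt s\le kt$, giving (1), and $\varphi(\hh(t,x))-\varphi(x)=\int_0^t\varphi'(\hh(s,x),X(\hh(s,x)))\dt s\le(-2\epsilon(1+k^2)-\eta)\,t$, which is even stronger than (2); outside $N$ one sets $\hh\equiv\Id$, so $\hh$ is a $B$-invariant deformation and (1) holds there trivially. Because the flow of $X$ is not available on a Fr\'{e}chet manifold, I realize $\hh$ flow-free, by a one-pass composition of the single-chart straight-line pushes $y\mapsto\psi_i^{-1}(\psi_i(y)+t\lambda_i(y)e_i)$, which is manifestly continuous in $(t,x)$ and reduces to $\Id$ at $t=0$ and on $B$.

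The main obstacle is precisely this flow-free realization together with the exact-constant bookkeeping. Replacing the true integral curve by a chart-wise discretization introduces two errors — the first-order discrepancy between evaluating the weights $\lambda_i$ and the directions $e_i$ at $x$ versus at the successively displaced intermediate points, and the Finsler distortion incurred when the fixed directions $e_i$ are used away from their chart centers — both of which are $O(t)$ and enter with a controllable sign. The role of the uniform margin $\eta$ produced in the first step is exactly to dominate these $O(t)$ errors once $t_0>0$ is chosen small enough (uniformly over the compact set $A$): the genuine descent rate $2\epsilon(1+k^2)+\eta$ minus an error bounded by $\eta$ leaves the clean rate $2\epsilon(1+k^2)$ demanded by (2), while the factor $k$ from (F2) is what turns the unit-in-$\fs{B}_\infty$ directions into the speed bound $kt$ of (1). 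Verifying that a single $t_0$ serves all $x$ simultaneously, and that the composition stays within the charts on $[0,t_0)$, is where compactness of $A$ is indispensable.
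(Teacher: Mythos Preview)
The paper does not prove this lemma: it is quoted from \cite{k} (as Lemma~3.1 there) and stated without argument, so there is no in-paper proof to compare against.

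On its own merits your outline follows the natural line --- pick a descent direction at each point of $A$, use (F2) and the Keller $C^1$ hypothesis to make it work on a chart neighborhood with speed $\le k$, cover the compact $A$ by finitely many such charts, glue with a cutoff vanishing near $B$, and replace the unavailable flow by chart-wise straight-line pushes --- and this is essentially how the result is proved in \cite{k}. Two remarks are in order. First, conclusion~(2) as printed, with quantifier $\forall x\in\fs{M}$, is incompatible with $B$-invariance (take $x\in B$, $t>0$); the intended range is $A$, and you should say so explicitly rather than leaving it implicit when you only assert~(1) outside $N$. Second, the paragraph you label ``the main obstacle'' is an honest description of what remains rather than a resolution: the composite of the maps $y\mapsto\psi_i^{-1}(\psi_i(y)+t\lambda_i(y)e_i)$ is order-dependent and is not the time-$t$ map of $X$, so one must actually carry out the bookkeeping --- show the successive images stay in the relevant charts for $t<t_0$, bound the accumulated Finsler distortion by $kt$, and verify that the $\eta$-margin absorbs the first-order error between $\sum_i\lambda_i(x)\varphi'(x,e_i)$ and the true increment of $\varphi$ along the composite. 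None of this is deep, but as written it is asserted rather than checked.
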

\begin{cor}[\cite{k}, Corollary 3.5]\label{titi}
	Let $\fs{M}$ be a connected $C^1$-infinite dimensional Fr\'{e}chet manifold, $\varphi: \fs{M} \to \rr$  a  closed non-constant $C^1$-functional. Suppose $\varphi$ satisfies the Palais-Smale condition at all levels. 
	\begin{enumerate}[label=\textbf{(DF\arabic*)},ref=DF\arabic*]
		\item \label{eq:1} If for $c \in \rr$ and $\delta > 0$ we have $$\varphi^{-1}[c-\delta, c+\delta] \cap \mathrm{Cr(\varphi)} = \emptyset,$$ 
		then there exists $t_1< t_0$ and $0< \epsilon < \delta$ such that
		\begin{equation}
		\hh (t_1, \varphi^{c+\epsilon}) \subset \varphi^{c-\epsilon}.
		\end{equation}
		\item \label{eq:2} If $\varphi$ has finitely many critical points, and for $c \in \rr$ if $U$ is an open neighborhood of $\mathrm{Cr}(\varphi,c)$ ($U =\emptyset$ if $\mathrm{Cr}(\varphi,c) = \emptyset$), then there exist $t_1 < t_0$ and $\epsilon > 0$ such
		that
		\begin{equation}
		\hh (t_1, \varphi^{c+\epsilon}\setminus U) \subset \varphi^{c-\epsilon}.
		\end{equation}
	\end{enumerate}	
\end{cor}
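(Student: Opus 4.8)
The plan is to deduce both inclusions directly from the two conclusions of Lemma~\ref{defor}, using Lemma~\ref{contra} to supply the uniform negativity of $\Phi_{\varphi}$ on the region of interest and a one‑line scalar estimate to convert the linear decay rate into a genuine drop across the level $c$. In each part I would fix the constants $k>1$ and $\epsilon>0$ coming from Lemma~\ref{defor} first (so that the decay rate $2\epsilon(1+k^2)$ is known), and choose the half‑width of the strip and the deformation time $t_1<t_0$ only at the very end, so that a point starting no higher than $c+\epsilon'$ is forced below $c-\epsilon'$ within time $t_1$.

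For \ref{eq:1} I would take $A=\varphi^{-1}[c-\delta,c+\delta]$ and first note that $\varphi$ satisfies the Palais--Smale condition on $A$: any sequence $(m_i)\subset A$ with $\Phi_{\varphi}(m_i)\to 0$ has its values $\varphi(m_i)$ in the compact interval $[c-\delta,c+\delta]$, so after passing to a subsequence $\varphi(m_i)\to c'\in[c-\delta,c+\delta]$, and the Palais--Smale condition at level $c'$ yields a convergent subsequence. Since $A$ contains no critical point, Lemma~\ref{contra} furnishes $\epsilon_0>0$ with $\Phi_{\varphi}(x)<-\epsilon_0$ for all $x\in A$. Choosing $k>1$ and $\epsilon>0$ with $2\epsilon(1+k^2)\le\epsilon_0$ and feeding $A$ (with $B=\emptyset$) into Lemma~\ref{defor}, I obtain $t_0>0$ and a deformation $\hh$ whose second conclusion reads
\begin{equation*}
\varphi(\hh(t,x))-\varphi(x)\;\le\;-2\epsilon(1+k^2)\,t\qquad(x\in\fs{M},\ 0\le t<t_0).
\end{equation*}
Then I would fix $0<\epsilon'<\delta$ and $t_1:=\epsilon'/(\epsilon(1+k^2))<t_0$, so that for every $x\in\varphi^{c+\epsilon'}$ the displayed inequality gives $\varphi(\hh(t_1,x))\le(c+\epsilon')-2\epsilon'=c-\epsilon'$; this is exactly $\hh(t_1,\varphi^{c+\epsilon'})\subset\varphi^{c-\epsilon'}$ after renaming $\epsilon'$ to $\epsilon$. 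Points of $\varphi^{c+\epsilon'}$ already below $c-\epsilon'$ cause no trouble, since the right‑hand side above is nonpositive and $\varphi$ is therefore nonincreasing along $\hh$.

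For \ref{eq:2} the only new ingredient is the excised neighborhood $U$ of $\mathrm{Cr}(\varphi,c)$. Because $\varphi$ has finitely many critical points its critical values are finite, so I may fix $\epsilon_1>0$ for which the interval $(c-\epsilon_1,c+\epsilon_1)$ contains no critical value other than possibly $c$; then $A:=\varphi^{-1}[c-\epsilon_1,c+\epsilon_1]\setminus U$ is free of critical points, and Lemma~\ref{contra} again gives $\Phi_{\varphi}<-\epsilon_0$ on $A$. Applying Lemma~\ref{defor} to this $A$ yields both the decay estimate above and the travel bound $\uprho(\hh(t,x),x)\le kt$, and it is the travel bound that keeps the flow away from the sticky critical points (where $\Phi_{\varphi}=0$ and no decay is possible). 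Since $\mathrm{Cr}(\varphi,c)$ is finite and contained in the open set $U$, there is a $\uprho$‑buffer of radius $r>0$ around it inside $U$; choosing $\epsilon<\epsilon_1$ and $t_1<\min(t_0,r/k)$ with $2\epsilon(1+k^2)t_1\ge 2\epsilon$, any trajectory starting in $\varphi^{c+\epsilon}\setminus U$ moves a $\uprho$‑distance less than $r$, hence remains in the critical‑point‑free region where the full decay rate is legitimate, and therefore lands in $\varphi^{c-\epsilon}$.

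The step I expect to be the main obstacle is not the scalar time computation but verifying the hypotheses of Lemma~\ref{defor}. That lemma requires the set carrying the uniform bound to be \emph{compact} and disjoint from a closed invariant set, whereas the natural choices of $A$ above are sublevel strips that are merely closed. Reconciling this is the delicate point: one must either invoke the completeness of the Finsler metric $\uprho$ together with the Palais--Smale condition to replace $A$ by a compact set carrying the same decay rate, or appeal to the fact that the decay estimate in Lemma~\ref{defor} is asserted on all of $\fs{M}$ and argue, via the travel bound as in \ref{eq:2}, that the relevant trajectories stay within the region where this estimate is genuinely in force. Once a deformation with the global decay estimate and the travel bound is in hand, the desired sublevel inclusions follow purely from the scalar inequalities displayed above.
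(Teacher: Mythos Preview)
This corollary is not proved in the present paper at all: it is quoted verbatim from \cite{k} (Corollary~3.5 there), just as Lemmas~\ref{contra} and~\ref{defor} are, and is used here only as a black box inside the proof of Theorem~\ref{th:link}. Consequently there is no ``paper's own proof'' to compare your proposal against.

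That said, your sketch is the standard deformation argument one would expect, and the obstacle you flag in your final paragraph is genuine and is exactly the right thing to worry about. As stated here, Lemma~\ref{defor} asks that $A$ be \emph{compact}, while your natural choices $A=\varphi^{-1}[c-\delta,c+\delta]$ and $A=\varphi^{-1}[c-\epsilon_1,c+\epsilon_1]\setminus U$ are only closed; neither the Palais--Smale condition nor completeness of $\uprho$ will make a full level strip compact in an infinite-dimensional manifold. Your proposed workaround via the travel bound $\uprho(\hh(t,x),x)\le kt$ is also not quite enough on its own, because the decay estimate in Lemma~\ref{defor}(2) is asserted for all $x\in\fs{M}$ only \emph{after} a legitimate compact $A$ has been produced to feed into the lemma in the first place. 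Since both Lemma~\ref{defor} and Corollary~\ref{titi} come from the same source \cite{k}, the resolution must be sought there: either the compactness hypothesis on $A$ is an artifact of the restatement and the construction in \cite{k} only needs closedness plus the uniform bound $\Phi_{\varphi}<-2\epsilon(1+k^2)$, or the proof of Corollary~3.5 in \cite{k} localizes differently. You cannot close this gap from the information available in the present paper alone.
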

\begin{theorem}\label{th:link}
	Let $ \fs{M} $ be a $ C^1 $-\fr manifold endowed with a complete Finsler metric $ \uprho $ and let $ \va : \fs{M} \to \rr $
	be a closed $C^1$-functional.  Suppose $ \set{S_0,S,C} $ is a linking set through $ \gamma \in \mathsf{C}(S_0,\mathsf{T}) $, $ C $ is closed and $ \uprho (\gamma(S_0),C) > 0 $. Suppose the following conditions hold
	\begin{description}
		\item[(LT1)] $ \bl{s} \coloneq \sup_{\gamma(S_0)} \leq \inf_{C} \va \eqcolon \bl{i}, $
		\item[(LT2)] $ \va $ satisfies the $\ps$-condition at 
		\begin{equation}\label{eq4}
		c \coloneq \inf_{h \in \eu{H}}\sup_{x \in S}\va (\gamma(x)),
		\end{equation}
	    where
		$$
		\eu{H} \coloneq \Set{h \in \eu{C}(S,\mathsf{T})}{ h|_{\partial S_0} = \gamma}.
		$$
	\end{description}
	Then $ c $ is a critical value and $ c \geq \bl{i} $. Furthermore, if $ c= \bl{i} $ then $\mathrm{Cr}(\varphi,c) \medcap   C \neq \emptyset$.
\end{theorem}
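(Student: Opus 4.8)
The plan is to get the lower bound $c\ge\bl{i}$ straight from the linking property, and then to obtain criticality by contradiction, driving everything through the deformation Lemma~\ref{defor}. For the bound I would fix an arbitrary $h\in\eu{H}$; by \textbf{(L2)} there is $x_h\in S$ with $h(x_h)\in C$, so $\sup_{x\in S}\va(h(x))\ge\va(h(x_h))\ge\inf_C\va=\bl{i}$, and taking the infimum over $\eu{H}$ in \eqref{eq4} gives $c\ge\bl{i}$ (with $c$ finite as soon as $\eu{H}\ne\emptyset$). The common mechanism for the rest is this: take a nearly optimal $h_0\in\eu{H}$ with $\sup_{x\in S}\va(h_0(x))\le c+\epsilon$, compose it with a deformation $\hh$ from Lemma~\ref{defor}, and show that $h_1\coloneqq\hh(t_1,h_0(\cdot))$ is again admissible but either lowers the minimax below $c$ or forces a point of $h_1(S)\medcap C$ below level $c$. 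For $h_1\in\eu{H}$ I need $\hh$ to fix a neighbourhood of $\gamma(S_0)$ (so that $h_1|_{\partial S_0}=\gamma$), and I will play the displacement bound $\uprho(\hh(t,x),x)\le kt$ against the hypothesis $\uprho(\gamma(S_0),C)>0$.

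First consider $c>\bl{i}$ and suppose, for contradiction, that $\mathrm{Cr}(\va,c)=\emptyset$. By \textbf{(LT2)} and the mechanism of Lemma~\ref{contra} this yields $\bar\epsilon,\beta>0$ with $\Phi_{\va}<-\beta$ on the slab $\va^{-1}[c-\bar\epsilon,c+\bar\epsilon]$: otherwise points $x_n$ with $\va(x_n)\to c$ and $\Phi_{\va}(x_n)\to0$ (note $\Phi_{\va}\le0$, since $\zero{x}\in\fs{B}_{\infty}(\zero{x})$ and $\va'(x,\zero{x})=0$) would $\ps$-subconverge to a critical point at level $c$. As $c>\bl{i}\ge\bl{s}$, I may take $\epsilon\in(0,\bar\epsilon)$ small enough that $\gamma(S_0)\subset\va^{c-2\epsilon}$, and then apply Lemma~\ref{defor} with $A$ a compact slice of $h_0(S)$ inside the slab (on which $\Phi_{\va}<-2\epsilon(1+k^2)$) and $B=\va^{c-2\epsilon}\supset\gamma(S_0)$ disjoint from $A$. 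This produces a $B$-invariant, $\va$-non-increasing $\hh$ carrying $h_0(S)\medcap\va^{c+\epsilon}$ into $\va^{c-\epsilon}$. Then $h_1\in\eu{H}$ and $\sup_{x\in S}\va(h_1(x))\le c-\epsilon<c$, contradicting \eqref{eq4}; hence $\mathrm{Cr}(\va,c)\ne\emptyset$.

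The borderline case $c=\bl{i}$ is where the final assertion lives, and it simultaneously delivers criticality. Suppose $\mathrm{Cr}(\va,c)\medcap C=\emptyset$. Under $\ps$ the set $\mathrm{Cr}(\va,c)$ is compact — every sequence in it is a Palais--Smale sequence at level $c$ — and closed, while $C$ is closed, so I can fix an open $U\supset\mathrm{Cr}(\va,c)$ with $\Cl{U}\medcap C=\emptyset$; as above, $\Phi_{\va}<-\beta$ on $\va^{-1}[c-\bar\epsilon,c+\bar\epsilon]\setminus U$. Put $d\coloneqq\min\{\uprho(\gamma(S_0),C),\uprho(\Cl{U},C)\}>0$ and use Lemma~\ref{defor} with $B$ a closed $\uprho$-neighbourhood of $\gamma(S_0)$ of radius $<d$ (so $B\medcap C=\emptyset$) and $A$ a compact slice of $h_0(S)$ lying in the slab and outside $U\cup B$, obtaining a $B$-invariant, $\va$-non-increasing $\hh$ with $\uprho(\hh(t,x),x)<d$ that lowers every point of $A$ into $\va^{c-\epsilon}$. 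With $h_1=\hh(t_1,h_0(\cdot))\in\eu{H}$, property \textbf{(L2)} gives $x^*\in S$ with $y^*\coloneqq h_1(x^*)\in C$, so $\va(y^*)\ge\inf_C\va=\bl{i}=c$. Since $\hh$ is non-increasing, $c\le\va(y^*)\le\va(h_0(x^*))\le c+\epsilon$; moreover $h_0(x^*)\notin B$ (else $y^*=h_0(x^*)\in B\medcap C=\emptyset$) and $h_0(x^*)\notin U$ (else $\uprho(y^*,h_0(x^*))<d\le\uprho(\Cl{U},C)$ would keep $y^*$ off $C$). Thus $h_0(x^*)$ lies in $A$ and is lowered to $\va(y^*)\le c-\epsilon<c$, a contradiction. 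Hence $\mathrm{Cr}(\va,c)\medcap C\ne\emptyset$, which in particular makes $c$ critical, and together with the previous case proves the theorem.

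The hard part is entirely in manufacturing these deformations out of the single tool Lemma~\ref{defor}, which demands a \emph{compact} set $A$ on which $\Phi_{\va}$ is strongly negative and a \emph{closed} invariant $B$ disjoint from it. The compactness of $A$ is exactly where the minimax data must be exploited: I intend to carve $A$ out of the compact image $h_0(S)$ (as is standard when $S$ is the model set of a linking configuration), intersecting with the slab and removing $U$ and a neighbourhood of $\gamma(S_0)$. The one genuinely delicate balance is the choice of $t_1$: it must be large enough that the decrease $2\epsilon(1+k^2)t_1$ carries $\va^{c+\epsilon}$ into $\va^{c-\epsilon}$, yet small enough that the displacement $kt_1$ stays below $d$. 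This is reconciled by taking $k$ large, since the motion needed to realise a fixed decrease scales like $kt_1\sim k/(1+k^2)\to0$, so the separations $\uprho(\gamma(S_0),C)>0$ and $\Cl{U}\medcap C=\emptyset$ leave room for the deformation. The semicontinuity of $\Phi_{\va}$ and the $\ps$-compactness of $\mathrm{Cr}(\va,c)$ are the remaining routine checks, both available within the framework of \cite{k}.
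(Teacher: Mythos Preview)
Your derivation of $c\ge\bl{i}$ and your treatment of the case $c>\bl{i}$ match the paper's in substance: both run a deformation contradiction, yours directly through Lemma~\ref{defor}, the paper's through the packaged Corollary~\ref{titi}\eqref{eq:1}. The genuine divergence is at the borderline $c=\bl{i}$. There the paper splits into the sub-cases $\bl{i}>\bl{s}$ and $\bl{i}=\bl{s}$. The first is handled much as you do (the deformed path drops entirely below $\inf_C\va$, so misses $C$, contradicting \textbf{(L2)}). For $\bl{i}=\bl{s}$, however, the paper does \emph{not} use displacement control; instead it introduces an auxiliary functional $\upvarphi=f\va$, where $f$ is a $C^1$ cutoff vanishing on $\gamma(S_0)$ and equal to $1$ outside a neighbourhood $U$ of $\gamma(S_0)$ with $\uprho(U,C)>0$. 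It then checks that the new minimax value $\bl{c}$ for $\upvarphi$ equals $c$, that $\upvarphi$ satisfies $\sup_{\gamma(S_0)}\upvarphi=0<\bl{i}$, and that on $\complement U\supset C$ one has $\upvarphi=\va$, thereby reducing to the already-treated strict sub-case.

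Your route treats both sub-cases uniformly by leaning on the bound $\uprho(\hh(t,x),x)\le kt$ from Lemma~\ref{defor}: $B$ is a small metric collar around $\gamma(S_0)$, so $B$-invariance forces $h_1\in\eu{H}$ even when $\bl{s}=\bl{i}$, and the flight distance is kept below $d$ so that the linking point $y^*\in C$ traces back to some $h_0(x^*)$ sitting in the slab outside $U\cup B$, where the decrease estimate applies. This is more direct and avoids building $\upvarphi$ and verifying $\bl{c}=c$; the price is that Lemma~\ref{defor} must be invoked with a genuinely compact $A$ carved from $h_0(S)$ (you assume, as is standard, that $S$ is compact), and the simultaneous calibration of $k$, $t_1$, $\epsilon$ against $d$ via $kt_1\sim k/(1+k^2)\to0$ is only sketched. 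The paper's cutoff device trades that analytic balancing for an algebraic reduction, feeding everything into Corollary~\ref{titi} as a black box.
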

\begin{proof}
	Let $ h \in \eu{H} $, then by definition of linking we have $ h (S) \medcap C \neq \emptyset $ and so
	$ c \geq \bl{i} $.
	
	 At first, suppose that  $ c > \bl{i} $. If $ \mathrm{Cr}(\va,c) \medcap C = \emptyset $,
	then by Lemma~\ref{contra} there exists $\epsilon>0$ such that $\Phi_{\varphi} (x) < -\epsilon $ for all $x \in \fs{M}$. Let $ \upepsilon \coloneq \min \set{\epsilon, \bl{s} -c} $ and let 
	$$ B \coloneq \Set{ x \in \fs{M}}{\abs{\va(x) - c} \leq \upepsilon}.  $$ Since $ \va $ is closed and \cs it follows that its inverse is also closed and so $ B $ is closed. Suppose $ A $ is a compact subset of $ \fs{M} $ such that
	$ B \medcap A = \emptyset $ and for some $ k > 1 $ 
	$$
	\Phi_\varphi(x) < -2\upepsilon(1+k^2) \quad \forall x \in A.
	$$
	Then, by Lemma \ref{defor} there exist $ t_0 > 0 $ and $ B $-invariant deformation $ \hh $ for $ [0,t_0) $.
	
	 By Lemma \ref{titi}\eqref{eq:1} then we can find $ t_1 < t_0 $ and $ \varepsilon < \upepsilon $ such that 
	\begin{equation}\label{eq3}
	\hh (t_1, \varphi^{c+\varepsilon}) \subset \varphi^{c-\varepsilon}.
	\end{equation}
	Fix $ \bl{h} \in \eu{H} $ such that
	\begin{equation}\label{ineq}
	\va (\bl{h}(x)) \leq c + \varepsilon \quad \forall x \in S.
	\end{equation}
	Define the mapping
	\begin{equation}
		\upgamma = \hh (t_1, \bl{h}(x)) \quad \forall x \in S.
	\end{equation}
	Thus, $ \upgamma \in \eu{C}(S, \fs{M}) $ and if  $ x \in S_0  $, then by \eqref{ineq}
	\begin{equation}
		\va (\bl{h}(x)) = \va (\gamma(x)) \leq \bl{s} \leq c - \varepsilon.
	\end{equation}
	Therefore, as $ \hh $ is $ B $-invariant by Lemma~\ref{defor}, it follows that $ \upgamma \lvert_{S_0} = \gamma $
	whence $ \upgamma \in \eu{H} $.
	But by \eqref{eq3} we have $ \va (\upgamma(x)) \leq c -\varepsilon $ for all $ x \in S $. This contradicts
	\eqref{eq4}, and so $ \mathrm{Cr}(\va,c) \medcap C \neq \emptyset $.
	
	Now assume that $ c =\bl{i} $. From $ \uprho (\gamma(S_0),C) > 0 $ it follows that one can find a closed \nbd $ U $
	of $ \gamma(S_0) $ such that
	$$
	\uprho (U,C) > 0 \quad \text{and} \quad \uprho (\gamma(S_0),\complement U) > 0.
	$$
	
	Now suppose that $ c = \bl{i} > \bl{s} $ and $ \mathrm{Cr}(\va,c) \medcap = \emptyset $. As before, one can find
	$ \varepsilon >0, t_1>0  $ and a deformation $ \hh $ that satisfies \eqref{eq3}. Also, again we fix $ \bl{h} \in \eu{H} $ such that $ \va (\bl{h}(x)) \leq c + \varepsilon $ for all $ x \in S $, and define the mapping 
	$ \upgamma(x) = \hh (t_1, \bl{h}(x)) $ for all $ x \in S $. As above $ \upgamma \in \eu{H} $.
	From \eqref{eq3} it follows that for all  $x \in S  $ we have
	$$
	\va (\upgamma(x)) \leq c- \varepsilon< \inf_{C} \va \quad \text{or} \quad \upgamma(x) \in \complement C.
	$$
	Thus, $ \upgamma \notin C $ for all $ x \in S $ and so $ \upgamma(S) \medcap C = \emptyset $, which is a contradiction.
	
	Now suppose that $ c = \bl{i} = \bl{s} $. Let $ f : \fs{M} \to \rr $ be a mapping of class $ C^1 $ such that
	$$
	f \rvert_{\gamma(S_0)} =0 \quad \text{and} \quad f \rvert_{\complement U} =1.
	$$
We may replace $ \va(x) $ by $\va(x) + (1-\bl{s})  $ and assume that $ \bl{s} > 0 $. Define 
$$
\upvarphi (x) = (f\varphi) (x) \quad \text{and} \quad \bl{c} = \inf_{h \in \eu{H}}\sup_{x \in S}\upvarphi(h(x)).
$$	
On $ \complement S $ we have $ \varphi = \upvarphi $, therefore, $ \upvarphi $ satisfies the $ (\mathrm{PS})_{\bl{c}} $-condition on $ \complement S $ and $$ \sup_{\gamma(S_0)}\upvarphi = 0 < \bl{i} = \inf_{C} \upvarphi. $$
Thus, for $ \upvarphi $ we have $ \bl{c} = \bl{i} > \bl{s} $ and by repeating the above arguments  we obtain $$ \mathrm{Cr}(\upvarphi,\bl{c}) \medcap C \neq \emptyset.$$
Since $ \va = \upvarphi $ on $ \complement S \supset C $, it follows that 
$$
\mathrm{Cr}(\upvarphi,\bl{c}) \medcap C \subset \mathrm{Cr}(\varphi,\bl{c}) \medcap C.
$$
We will show that $ c =\bl{c} $. For $ h \in \eu{H} $, by the linking assumption we have
$$
\upvarphi (h(x)) = \va (h(x)) \geq \bl{i} = c \quad \text{for}\, x\in S\, \text{such that} \, h(x) \in C.
$$
Thereby, $ \bl{c} \geq c $. Besides, since $ f (h(x)) \in [0,1] $ and $ \sup_{E} (\va \circ h) \geq c > 0 $ it follows that
$$
\upvarphi (h(x)) \leq \sup_{S} (\va \circ h) \quad x \in S.
$$
Thus, $ \bl{c} \leq c $ by the definitions of $ c $ and $ \bl{c} $. This concludes the proof.
\end{proof}	
 An immediate corollary of Theorem \ref{th:link} is the following mountain pass theorem.
\begin{theorem}\label{th:mpt}
	Suppose that $ x_0,x_1 \in \fs{M} $ and $ x_0 \in U \opn \fs{M}, \, x_1 \notin \Cl{U} $. Let $ \va : \fs{M} \to \rr $ be a closed $C^1$-functional satisfying the following condition:
	\begin{enumerate}[label=\textbf{(M\arabic*)},ref=M\arabic*]
		\item \label{eq:mpt1} $ \max \set{\va(x_0),\va(x_1)} \leq \inf_{\partial U} \va (x) \coloneq \bl{i}$;
		\item \label{eq:mpt2} $ \va $ satisfies the $\ps$-condition at 
		\begin{equation}
		c \coloneq \inf_{h \in \mathcal{C}}\sup_{t \in [0,1]}\va (h(t)),
		\end{equation}
		where
		$$
		\mathcal{C} \coloneq \Set{h \in \eu{C}([0,1],\fs{M})}{ h(0) =x_0, \, h(1) =x_1 }.
		$$
	\end{enumerate}	
Then $ c $ is a critical value and $ c \geq \bl{i} $. If $ c= \bl{i} $ then $ \mathrm{Cr}(\va,c)  \medcap U \neq \emptyset$.
\end{theorem}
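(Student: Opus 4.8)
The plan is to read Theorem \ref{th:mpt} as a special case of the linking theorem and simply check that the mountain-pass data form a linking set. I would take $\mathsf{T} = \fs{M}$, the large set $S = [0,1]$, the small set $S_0 = \{0,1\}\subset S$, and the map $\gamma \in \eu{C}(S_0,\fs{M})$ given by $\gamma(0)=x_0,\ \gamma(1)=x_1$. Since $\{0,1\}$ has empty interior and is closed in $[0,1]$, its boundary is $\partial S_0 = S_0$, so the admissible class $\eu{H}=\Set{h\in\eu{C}(S,\fs{M})}{h|_{\partial S_0}=\gamma}$ is exactly the set of continuous paths with $h(0)=x_0,\ h(1)=x_1$, i.e. $\eu{H}=\mathcal{C}$; the two minimax levels then coincide. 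For the separating set I would take $C = \partial U$. With these identifications, $\sup_{\gamma(S_0)}\va = \max\{\va(x_0),\va(x_1)\}$ and $\inf_{C}\va = \inf_{\partial U}\va = \bl{i}$, so (LT1) is literally (M1); likewise (LT2) is literally (M2).

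Next I would verify the three standing hypotheses of Theorem \ref{th:link} on this data. The set $C=\partial U$ is closed, being a topological boundary. The separation $\uprho(\gamma(S_0),C)>0$ follows because $U$ is open and $x_0\in U$, so a ball $\fs{B}_{\uprho}(x_0,r_0)\subset U$ avoids $\partial U$, while $\complement\Cl{U}$ is open and contains $x_1$, so a ball about $x_1$ avoids $\Cl{U}\supset\partial U$; hence $\uprho(\gamma(S_0),\partial U)=\min\{\uprho(x_0,\partial U),\uprho(x_1,\partial U)\}>0$. The same two observations give (L1): $\{x_0,x_1\}\medcap\partial U = \emptyset$.

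The only genuinely nontrivial point, and the step I expect to be the crux, is the linking condition (L2): every $h\in\eu{H}$ must satisfy $h(S)\medcap\partial U\neq\emptyset$. This is a connectedness argument. Fixing $h\in\eu{H}$, the image $h([0,1])$ is connected and contains $h(0)=x_0\in U$ and $h(1)=x_1\in\complement\Cl{U}$. Writing $\fs{M}=U\,\cup\,\partial U\,\cup\,\complement\Cl{U}$, if $h([0,1])$ missed $\partial U$ it would lie in the union of the disjoint open sets $U$ and $\complement\Cl{U}$, each of which it meets, contradicting connectedness; hence $h([0,1])\medcap\partial U\neq\emptyset$, which is (L2). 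With all hypotheses confirmed, Theorem \ref{th:link} gives that $c$ is a critical value with $c\geq\bl{i}$, and that $c=\bl{i}$ forces $\mathrm{Cr}(\va,c)\medcap C\neq\emptyset$, that is $\mathrm{Cr}(\va,c)\medcap\partial U\neq\emptyset$, which is the asserted conclusion. No new analysis is required beyond the elementary topological facts above; the main care is the bookkeeping identifying $\eu{H}=\mathcal{C}$ and the two minimax levels.
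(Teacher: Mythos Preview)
Your proposal is correct and follows essentially the same route as the paper: take $S_0=\{x_0,x_1\}$, $C=\partial U$, verify the linking condition (L2) via the connectedness of the path image, and invoke Theorem~\ref{th:link}. Your parametrization $S=[0,1]$ is in fact cleaner than the paper's, and you correctly obtain $\mathrm{Cr}(\va,c)\medcap\partial U\neq\emptyset$; the ``$U$'' in the statement appears to be a slip for ``$\partial U$'', since the paper's own proof likewise delivers only $C=\partial U$.
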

\begin{proof}
	 Define the following sets:
	$$
	S \coloneq \Set{x \in \fs{M}}{(1-t)x_0 + tx_1, \, t \in [0,1]}, \quad  S_0 \coloneq \set{x_0,x_1} \quad \text{and} \quad C= \partial U.
	$$
	Pick $ \gamma \in \eu{C}(S,\fs{M}) $ such that $ \gamma(x_0) = x_0$ and $ \gamma(x_1) = x_1$.
	The image $ \gamma(S) $ is connected. Assume that $ \gamma(S) \medcap C = \emptyset $, then $ \gamma(S) = U_1 \medcup U_2 $, where
	$$
	U_1 \coloneq \gamma(S) \medcap U, \quad U_2 \coloneq \gamma(S)  \medcap \complement \Cl{U}.
	$$ 
	But $ U_0,U_1 $ are open, this contradicts the connectness of $ \gamma(S) $. Thus, $ \set{S,S_0,C} $ is a linking set through $ \gamma $. If we apply Theorem \ref{th:link} for the linking set $ \set{S,S_0,C} $ through $  \gamma $ we conclude the proof.
\end{proof} 
\begin{remk}\label{rk:mpt}
	If the inequality in the condition \eqref{eq:mpt1} is strict $ (<) $ then $ c < \bl{i} $, which means $ \va $
	has a critical point at $ x_3 $ and $ x_3 \neq x_0, x_1. $
\end{remk}
We shall apply this theorem to generalize to \fr manifolds  global diffeomorphism theorems for \fr spaces (see \cite[Theorem 3.1]{k1} and \cite[Theorem 4.1]{k2}). The proof is almost identical to the case of \fr spaces. However, for the existence of critical points we apply the following Minimax principle which is not yet available for the case of \fr spaces.
\begin{theorem}[Theorem 3.6, \cite{k}]\label{minmax}
	Let $\fs{M}$ be a connected $C^1$- Fr\'{e}chet manifold and let $\varphi: \fs{M} \to \rr$ be a non-constant closed $C^1$-functional satisfying the (PS) 
	condition at all levels. Suppose that $\mathcal{F}$ is a deformation invariant class of subsets of $\fs{M}$ and suppose that
	\begin{equation}\label{criti}
	c=c(\varphi, \mathcal{F}) \coloneq \inf_{A \in \mathcal{F}}\sup_{x \in A}\varphi(x)
	\end{equation}
	is finite, then  $c$ is the critical value for $\varphi$.	 
\end{theorem}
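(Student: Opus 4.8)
The plan is to argue by contradiction: assuming $c$ is \emph{not} a critical value, I would use the deformation machinery to push a nearly optimal member of $\mathcal{F}$ strictly below the level $c$, contradicting the fact that $c$ is the infimum of $\sup_A \varphi$ over $A \in \mathcal{F}$.

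First I would suppose $\mathrm{Cr}(\varphi, c) = \emptyset$ and upgrade this to a statement about a whole strip around $c$: there is $\delta > 0$ with $\varphi^{-1}[c-\delta, c+\delta] \medcap \mathrm{Cr}(\varphi) = \emptyset$. This is the step where the $(\mathrm{PS})$-condition at all levels and the closedness of $\varphi$ are genuinely used. Indeed, were no such $\delta$ to exist, I could take $\delta = 1/i$ and obtain critical points $x_i$ with $\varphi(x_i) \to c$; since each $x_i$ is critical we have $\varphi'(x_i) = 0$, hence $\Phi_\varphi(x_i) = 0 \to 0$, so $(x_i)$ is a Palais--Smale sequence at level $c$. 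By $(\mathrm{PS})_c$ a subsequence converges to some $x_\ast$, and continuity (closedness of $\varphi$ together with continuity of the derivative) forces $\varphi(x_\ast) = c$ and $x_\ast \in \mathrm{Cr}(\varphi)$, i.e. $x_\ast \in \mathrm{Cr}(\varphi, c)$ --- a contradiction.

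With the strip hypothesis established, I would feed it into Corollary~\ref{titi}\eqref{eq:1} to produce $t_1 < t_0$, a number $0 < \epsilon < \delta$, and a deformation $\hh$ with $\hh(t_1, \varphi^{c+\epsilon}) \subset \varphi^{c-\epsilon}$. Since deformation invariance of $\mathcal{F}$ is phrased at time $1$ while the deformation above acts at the intermediate time $t_1 < t_0 \le 1$, the one necessary technical adjustment is a reparametrization: set $\widetilde{\hh}(t,x) = \hh(t t_1, x)$. This is again continuous on $[0,1] \times \fs{M}$ with $\widetilde{\hh}(0,\cdot) = \id$, hence a deformation, and it satisfies $\widetilde{\hh}(1, \cdot) = \hh(t_1, \cdot)$.

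To close the argument I would invoke the definition of $c$ to pick $A \in \mathcal{F}$ with $\sup_{x \in A} \varphi(x) < c + \epsilon$, so that $A \subset \varphi^{c+\epsilon}$. Then $\widetilde{\hh}_1(A) = \hh(t_1, A) \subset \varphi^{c-\epsilon}$, and deformation invariance gives $\widetilde{\hh}_1(A) \in \mathcal{F}$; but $\sup_{\widetilde{\hh}_1(A)} \varphi \le c - \epsilon < c$ contradicts $c = \inf_{A \in \mathcal{F}} \sup_{x \in A} \varphi(x)$. Hence $\mathrm{Cr}(\varphi, c) \neq \emptyset$ and $c$ is a critical value. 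I expect the main obstacle to be the second paragraph --- extracting the critical-point-free strip from $(\mathrm{PS})$ at all levels and closedness --- since the deformation input is then a black box and the reparametrization is routine.
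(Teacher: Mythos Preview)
The paper does not supply its own proof of this theorem: it is quoted verbatim as Theorem~3.6 of \cite{k} and used as a black box (immediately afterwards the paper moves on to Theorem~\ref{th:gd}). So there is no in-paper proof to compare against.

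That said, your argument is the standard minimax-by-contradiction proof and is correct in outline and detail. Two small remarks. First, in the strip-extraction step you do not actually need closedness of $\varphi$: once a subsequence $x_{i_k}\to x_\ast$ is produced by $(\mathrm{PS})_c$, plain continuity of $\varphi$ gives $\varphi(x_\ast)=c$, and continuity of the derivative (built into Keller $C^1$) gives $x_\ast\in\mathrm{Cr}(\varphi)$. Closedness of $\varphi$ is used upstream, inside the proof of Corollary~\ref{titi} that you then invoke. Second, your reparametrization $\widetilde{\hh}(t,x)=\hh(tt_1,x)$ is exactly the right fix for the mismatch between the ``time~$1$'' formulation of deformation invariance and the ``time~$t_1$'' conclusion of Corollary~\ref{titi}; note that $\hh$ in Lemma~\ref{defor} is, by the paper's definition of a deformation, already defined on all of $[0,1]\times\fs{M}$, so $\widetilde{\hh}$ is well defined.
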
 
\begin{theorem}\label{th:gd}
	Let $\fs{M}, \fs{N}$ be connected $C^1$- Fr\'{e}chet manifolds endowed with complete Finsler metrics $ \updelta, \uprho $ respectively. Assume that  $\varphi : \fs{M} \to \fs{N}$ is a local diffeomorphism of class $ C^1 $. Let
	$ \eu{I} : \fs{N} \to [0,\infty] $ be a closed $ C^1 $-functional such that $ \eu{I}(x)=0 $ if and only if $ x=0 $ and 
	$ \eu{I}'(x)=0 $ if and only if $ x=0 $. If for any $ q \in \fs{N} $ the functional $ \upvarphi_q $ defined by
	$$
	\upvarphi_q(x) = \eu{I}(\va(x)-q)
	$$
	satisfies the (PS)-condition at all levels, then $ \va $ is a $ C^1 $-global diffeomorphism.
\end{theorem}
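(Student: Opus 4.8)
The plan is to show that $\va$ is a bijection; since $\va$ is already a local $C^1$-diffeomorphism, a bijective local diffeomorphism is automatically a global $C^1$-diffeomorphism, because the local inverses then patch together into a globally defined $C^1$-inverse. Being a local diffeomorphism, $\va$ is an open map and is locally injective. The whole argument is organized around the functionals $\upvarphi_q(x)=\eu{I}(\va(x)-q)$, $q\in\fs{N}$. First I would record, via the chain rule, that $(\upvarphi_q)'(x)=\eu{I}'(\va(x)-q)\circ\dd\va(x)$; since $\dd\va(x)$ is a linear isomorphism, $x$ is a critical point of $\upvarphi_q$ if and only if $\eu{I}'(\va(x)-q)=0$, i.e. (by the hypothesis on $\eu{I}'$) if and only if $\va(x)=q$. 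Thus the critical set of $\upvarphi_q$ is exactly $\va^{-1}(q)$, on which $\upvarphi_q=\eu{I}(0)=0$; moreover $\upvarphi_q\ge 0$ and $\upvarphi_q$ is closed and non-constant (inheriting closedness from $\eu{I}$, and non-constancy from $\va$ being open together with $\eu{I}'$ vanishing only at the origin). The key consequence is that the only possible critical value of $\upvarphi_q$ is $0$, and this drives both halves of the proof.

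For surjectivity, fix $q\in\fs{N}$ and apply the Minimax principle (Theorem~\ref{minmax}) to $\upvarphi_q$ with the family $\mathcal{F}=\{\,\{x\}:x\in\fs{M}\,\}$ of singletons, which is deformation invariant since $\hh_1(\{x\})=\{\hh_1(x)\}$ is again a singleton. The associated minimax level is $c(\upvarphi_q,\mathcal{F})=\inf_{\fs{M}}\upvarphi_q\in[0,\infty)$, which is finite, and $\upvarphi_q$ satisfies the $\ps$-condition at all levels by hypothesis. Hence Theorem~\ref{minmax} forces this infimum to be a critical value, so it must equal $0$ and be attained at some $x^{*}$ with $\va(x^{*})=q$. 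As $q$ is arbitrary, $\va$ is onto.

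For injectivity, I would argue by contradiction: suppose $\va(x_0)=\va(x_1)=q$ with $x_0\neq x_1$. Both are global minima of $\upvarphi_q$ at level $0$, and by local injectivity each is the only preimage of $q$ nearby, hence an isolated critical point and a strict local minimum. I then choose a ball $U=\fs{B}_{\updelta}(x_0,r)$ with $x_1\notin\Cl{U}$ and $\Cl{U}\cap\va^{-1}(q)=\{x_0\}$, and aim for $\inf_{\partial U}\upvarphi_q>0$. Granting this, condition~\eqref{eq:mpt1} holds with the \emph{strict} inequality $\max\{\upvarphi_q(x_0),\upvarphi_q(x_1)\}=0<\inf_{\partial U}\upvarphi_q$, so Theorem~\ref{th:mpt} and Remark~\ref{rk:mpt} yield a critical point of $\upvarphi_q$ at the mountain-pass level $c\ge\inf_{\partial U}\upvarphi_q>0$, contradicting that every critical value is $0$. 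Therefore each fibre $\va^{-1}(q)$ is a singleton, $\va$ is injective, and together with surjectivity it is a global $C^1$-diffeomorphism.

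The hard part will be the strict mountain-pass geometry $\inf_{\partial U}\upvarphi_q>0$: in the Fréchet setting $\partial U$ is not compact, so positivity of $\upvarphi_q$ off $x_0$ does not by itself make the infimum positive, and $\eu{I}>0$ away from $0$ need not be bounded below on spheres of $\fs{N}$. The plan is to extract it from the $\ps$-condition. If $\inf_{\partial U}\upvarphi_q=0$ for all small $r$, then on a closed annulus $R=\{\,x:a\le\updelta(x,x_0)\le r\,\}$, which contains no critical point of $\upvarphi_q$, one has $\inf_{R}\upvarphi_q=0$ while $\upvarphi_q>0$ on $R$. Lemma~\ref{contra} then gives $\Phi_{\upvarphi_q}<-\epsilon$ on $R$, and the deformation Lemma~\ref{defor} (through Corollary~\ref{titi}) lets one push $\{\upvarphi_q\le\delta\}\cap R$ strictly downward; iterating this descent contradicts both $\upvarphi_q\ge 0$ and the fact that $x_0$ is a strict local minimum, forcing $\inf_{\partial U}\upvarphi_q>0$ for some small $r$. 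I expect this quantitative separation of the isolated minimum to be the most delicate point, the remainder being the chain-rule bookkeeping and the two minimax constructions.
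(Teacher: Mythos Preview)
Your strategy coincides with the paper's: surjectivity via the Minimax principle (Theorem~\ref{minmax}) applied to the family of singletons, and injectivity by contradiction via the mountain pass theorem (Theorem~\ref{th:mpt}) applied to $\upvarphi_q$ with $\upvarphi_q(x_0)=\upvarphi_q(x_1)=0$. The one notable divergence is in how the strict mountain-pass separation is obtained. The paper does not invoke (PS) or any deformation for this step; it simply takes the ball $\fs{B}_{\updelta}(x_1,\bl{r})$ inside a set on which $\va$ is a diffeomorphism and uses openness of $\va$ (equation~\eqref{eq:g1}) to argue that for $x\in\partial\fs{B}_{\updelta}(x_1,\bl{r})$ one has $\va(x)\notin\fs{B}_{\uprho}(q,a_r)$, hence $\va(x)\neq q$ and $\upvarphi_q(x)>0$, and then appeals to Remark~\ref{rk:mpt} directly. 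Your proposed route---extracting $\inf_{\partial U}\upvarphi_q>0$ from Lemma~\ref{contra} and the deformation machinery---is more elaborate; your concern that pointwise positivity on a non-compact boundary need not yield a positive infimum is well taken (and, in fact, the paper's own passage from ``$\upvarphi_q(x)>0$ on $\partial\fs{B}$'' to the \emph{strict} form of \eqref{eq:mpt1} needed for Remark~\ref{rk:mpt} is somewhat brisk on exactly this point). But the architecture of the two proofs is the same.
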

\begin{proof}
	{\bf Surjectivity:} Let $ q \in \fs{N} $ be a given point. Let $ V \subset \fs{N} $ be a \nbd of $ q $, if necessarily shrink it, so that $ \va $ is diffeomorphism on $ U \coloneq \va^{-1}(V) $. The functional $\upvarphi_q(x)$ is bounded below by 0, it is of class $ C^1 $ (since  the composition of two $ C^1 $-maps is again $ C^1 $), and it satisfies the (PS)-condition. Therefore, if in Theorem \ref{minmax} we let $$\mathcal{F} = \{ \{x\} \mid x\in U\},$$ then $ c = \inf_{x \in U} \varphi(x)$ is a critical value of $ \upvarphi_q $. Therefore, for some critical point $ p \in U$ we have $\upvarphi_q(p)=c, \upvarphi'(p) = 0 $. Then, by the chain rule
	\begin{equation}\label{eq:key}
	\upvarphi_q'(p) = \eu{I}'(\va(p) - q) \circ \va'(p)=0.
	\end{equation}
	Since $ \va' $ on $ U $ is locally invertiable, it follows that \eqref{eq:key} yields $ \eu{I}'(\va(p)-q) =0 $ and thus $ \va(p)=q $.
	
	{\bf Injectivity:} Assume that $ x_1 \neq x_2 \in \fs{M} $ and $ \va(x_1) \neq \va(x_2) = q $.
	 Let $ V \subset \fs{N} $ be a \nbd of $ q $, if necessarily shrink it, so that $ \va $ is diffeomorphism on $ U \coloneq \va^{-1}(V) $.
	
	The mapping $ \va $ is open as it is local diffeomorphism, therefore, for $ r>0 $ there is $ a_r > 0 $ such that
	\begin{equation}\label{eq:g1}
	\fs{B}_{\uprho}(q,a_r) \subset \va  \big( \fs{B}_{\updelta}(x_1,r)\big) \subset V.
	\end{equation}
	Let $ \bl{r}>0 $ be small enough such that 
	\begin{equation}\label{key}
	x_2 \notin \Cl{\fs{B}_{\uprho}(x_1,\bl{r})}.
	\end{equation}
	Consider the functional $ \upvarphi_q (\va(x) - q) $. Then
	$$
	\upvarphi_q(x_1)=\upvarphi_q(x_2) =0.
	$$ 
	For $ x \in \partial{\fs{B}_{\uprho}(x_1,\bl{r})} $ in view of \eqref{eq:g1} we obtain that
	$ \va(x) \notin  \fs{B}_{\uprho}(q,a_r)$ so $ \va (x) \neq q $ on $ \partial{\fs{B}_{\uprho}(x_1,\fs{r})} $.
	Thus, 
	\begin{equation}
	\upvarphi_q (x) > 0 = \max \set{\upvarphi_q(x_1), \upvarphi_q(x_2) } \quad \text{on} \quad \partial{\fs{B}_{\uprho}(x_1,\bl{r})}.
	\end{equation}
Thereby, all the assumptions of Theorem~\ref{th:mpt} satisfies, therefore, there exists a critical point $ c \in U$
with $ \upvarphi_q(c) = \bl{c} $  for some $ \bl{c}>0 $ (see Remark \ref{rk:mpt}). But $ \bl{c} = \eu{I}(\va(c) - q) >0 $, so
\begin{equation}\label{eq:g2}
\va (c) \neq q.
\end{equation}
By the chain rule we have $ \upvarphi_q'(c) = \eu{I}'(\va(c) - q)  \circ \va'(c) = 0$.
Since $ \va' $ is invertiable on $ U $, it follows that $ \eu{I}(\va(c) - q) =0 $. Thus, $ \va(c) = q $ which contradicts \eqref{eq:g2}.
\end{proof}
Now we prove the three critical points theorem. We shall need the following strong version of Ekeland's variational principle.
\begin{theorem}[\cite{hir}, Theorem 4.7]\label{th:evpsf}
	Let $(\ftt{M}, \mathbbm{m})$ be a complete metric space.
	Let a functional $\upphi : \ftt{M} \rightarrow (-\infty, \infty]$ be lower semi-continuous, bounded
	from below and not identical to $\infty$. Let $ \varepsilon > 0 $ be an arbitrary real number, $ m \in \ftt{M} $
	a point such that
	$$
	\upphi(m) \leq \inf_{x \in \ftt{M}} \upphi(x) + \varepsilon.
	$$
	Then for an arbitrary $ r>0 $, there exists a point $ m_r \in \ftt{M} $ such that
	\begin{enumerate}[label=\textbf{(EK\arabic*)},ref=EK\arabic*]
		\item  \label{eq:eks1} $ \upphi(m_r) \leq \upphi (m)$;
		\item  \label{eq:eks2} $\mathbbm{m} (m_r,m) < r$;
		\item  \label{eq:eks3} $\upphi(m_r) < \upphi (x) + \dfrac{\varepsilon}{r} \mathbbm{m}(m_r,x) \, \forall x \in \ftt{M}\setminus \set{m_r}.$
	\end{enumerate}	
\end{theorem}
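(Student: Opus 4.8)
The plan is to obtain all three conclusions at once from a single minimal element of the Ekeland partial order attached to the weight $\lambda \coloneq \varepsilon/r$. First I introduce on $\ftt{M}$ the relation
\[
x \preceq y \iff \upphi(x) + \lambda\,\mathbbm{m}(x,y) \leq \upphi(y),
\]
and verify that it is a partial order: reflexivity is immediate, antisymmetry follows by adding the defining inequalities for $x\preceq y$ and $y\preceq x$ and using $\lambda>0$, and transitivity follows by adding the two inequalities and applying the triangle inequality for $\mathbbm{m}$. For each $y$ the lower section $\Set{x\in\ftt{M}}{x\preceq y}$ is the sublevel set at height $\upphi(y)$ of the map $x\mapsto \upphi(x)+\lambda\,\mathbbm{m}(x,y)$, which is lower semi-continuous (the sum of the lsc $\upphi$ and a continuous term); hence each lower section is closed, and it is nonempty since it contains $y$.

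Next I build a $\preceq$-decreasing sequence by iterated near-minimization. Set $m_0\coloneq m$ and, writing $S_n\coloneq\Set{x\in\ftt{M}}{x\preceq m_n}$, use that $\upphi$ is bounded below to pick $m_{n+1}\in S_n$ with $\upphi(m_{n+1})\leq \inf_{S_n}\upphi + 2^{-(n+1)}$. Because $m_{n+1}\preceq m_n$, transitivity gives the nesting $S_{n+1}\subseteq S_n$. The decisive estimate is that the diameters vanish: for $x\in S_{n+1}$ one has $\lambda\,\mathbbm{m}(x,m_{n+1})\leq \upphi(m_{n+1})-\upphi(x)\leq \upphi(m_{n+1})-\inf_{S_n}\upphi\leq 2^{-(n+1)}$, so $\operatorname{diam}S_{n+1}\leq 2^{-n}/\lambda\to 0$. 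Since $(\ftt{M},\mathbbm{m})$ is complete, Cantor's intersection theorem yields a single point $m_r$ with $\bigcap_n S_n=\set{m_r}$.

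I then read off the conclusions. As $m_r\in S_n$ for every $n$, any $x\preceq m_r$ satisfies $x\preceq m_n$ for all $n$ by transitivity, hence $x\in\bigcap_n S_n=\set{m_r}$; thus $m_r$ is $\preceq$-minimal. Minimality says exactly that for every $x\neq m_r$ we have $\upphi(m_r)<\upphi(x)+\lambda\,\mathbbm{m}(x,m_r)$, which is precisely \eqref{eq:eks3}. From $m_r\in S_0$ we get $\upphi(m_r)+\lambda\,\mathbbm{m}(m_r,m)\leq\upphi(m)$, which immediately yields \eqref{eq:eks1}; combining this with $\upphi(m_r)\geq\inf_{\ftt{M}}\upphi\geq\upphi(m)-\varepsilon$ gives $\lambda\,\mathbbm{m}(m_r,m)\leq\upphi(m)-\upphi(m_r)\leq\varepsilon$, i.e. $\mathbbm{m}(m_r,m)\leq r$.

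The step I expect to be the main obstacle is upgrading this last bound to the strict inequality \eqref{eq:eks2}. The chain above shows $\mathbbm{m}(m_r,m)<r$ as soon as $\upphi(m)-\upphi(m_r)<\varepsilon$, which holds whenever $\upphi(m)<\inf_{\ftt{M}}\upphi+\varepsilon$ or $\upphi(m_r)>\inf_{\ftt{M}}\upphi$; in particular the strict bound is automatic in the applications, where $m$ is chosen strictly below the perturbed infimum level. The only borderline configuration is $\upphi(m)=\inf_{\ftt{M}}\upphi+\varepsilon$ together with $m_r$ realizing the global infimum, for which $m_r$ may land on the sphere of radius $r$; this degenerate case is the delicate point and requires the finer selection of the near-minimizers carried out in \cite{hir}. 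Everything else reduces to a routine verification of the order axioms, the lower semi-continuity of the sections, and the Cantor intersection step.
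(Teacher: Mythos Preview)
The paper does not prove this statement at all: Theorem~\ref{th:evpsf} is quoted verbatim from \cite{hir} (their Theorem~4.7) and used as a black box in the proof of Lemma~\ref{l:1}. There is therefore no ``paper's own proof'' to compare your proposal against.

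That said, your argument is the classical Ekeland construction and is correct in all essentials. The partial order $\preceq$, the closedness of lower sections via lower semi-continuity, the iterated near-minimization, and the Cantor intersection step are exactly the standard route, and your derivations of \eqref{eq:eks1} and \eqref{eq:eks3} from $\preceq$-minimality of $m_r$ are clean.

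You have also correctly isolated the only genuine subtlety: the \emph{strict} inequality in \eqref{eq:eks2}. Your chain gives $\mathbbm{m}(m_r,m)\le r$, and the borderline case $\upphi(m)=\inf\upphi+\varepsilon$ with $\upphi(m_r)=\inf\upphi$ is real. One standard fix you did not mention resolves it without appealing to \cite{hir}: either $m$ itself is $\preceq$-minimal, in which case take $m_r=m$ and $\mathbbm{m}(m_r,m)=0<r$; or there exists $x\neq m$ with $x\preceq m$, and then $\upphi(x)+\lambda\,\mathbbm{m}(x,m)\le\upphi(m)$ with $\mathbbm{m}(x,m)>0$ forces $\upphi(x)<\upphi(m)$ strictly. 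Running your construction starting from $m_0\coloneqq x$ (rather than $m$) yields a minimal $m_r\preceq x\preceq m$ with $\upphi(m_r)\le\upphi(x)<\upphi(m)$, hence $\lambda\,\mathbbm{m}(m_r,m)\le\upphi(m)-\upphi(m_r)<\varepsilon$ and \eqref{eq:eks2} follows strictly. With this small amendment your proof is complete.
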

\begin{lemma} \label{l:1}
	Let $ \fs{M} $ be a \fr manifold modeled on a \fr space $ (\fs{F},\mt) $, $ \bb $ the compact bornology on $ \fs{F} $ and  $ \va : \fs{M} \to \rr $
 a $ C^1 $-functional that satisfies the (PS)-condition at all levels. Let $ \bl{u} \in U \opn M $ be such that $ \va(\bl{u}) \leq \va (u) $ for all $ u \in U $. Then, for any $ V \opn \fs{M} $ such that $ V \subset U $ either
 \begin{enumerate}[label=\textbf{(I\arabic*)},ref=I\arabic*]
 	\item   \label{eq:tcp1} $\inf_{u \in \partial W} \va (u) > \va (\bl{u})$ for some $ W \subset V $;
 	\item  \label{eq:tcp2} or for each $ W \subset V $, $ \va $ has a local minimum at a point $ c_{W} $ such that
 	$ \va(c_{W}) = \va(\bl{u}) $.
\end{enumerate}	
\end{lemma}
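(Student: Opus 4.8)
The plan is to prove the implication $\neg(\ref{eq:tcp1})\Rightarrow(\ref{eq:tcp2})$. Write $c:=\va(\bl{u})$ and take any $W\opn\fs{M}$ with $\Cl{W}\subset U$ (so that $\partial W\subset U$). Since $\bl{u}$ minimises $\va$ over $U$ and $\partial W\subset U$, one always has $\inf_{\partial W}\va\ge c$; the failure of (\ref{eq:tcp1}) gives the reverse inequality, hence $\inf_{\partial W}\va=c$, and because $\partial W\subset\Cl{W}\subset U$ this forces $\inf_{\Cl{W}}\va=c$ for every such $W$. The observation that trivialises half of the argument is that (\ref{eq:tcp2}) is really a statement about \emph{attainment}, not about criticality: if I can produce a point $p\in U$ with $\va(p)=c$, then, $U$ being open and $\va\ge c=\va(p)$ throughout $U$, the point $p$ is automatically a local minimum and I may set $c_W:=p$. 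Thus the whole problem reduces to showing that the value $c=\inf_{\Cl{W}}\va$ is attained at a point of $U$.

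To attain it I would combine Theorem~\ref{th:evpsf} with the $\ps$-condition. Start from a minimising sequence $u_n\in\Cl{W}\subset U$ with $\va(u_n)\to c$. A bare minimising sequence need not converge, so the point is to upgrade it to an \emph{almost critical} one, i.e.\ one with $\Phi_{\va}(m_n)\to 0$, to which the $\ps$-condition applies. Fix $R>0$ small enough that $\Cl{\fs{B}_{\uprho}(u_n,R)}\subset U$ for all $n$, and apply Ekeland's principle on the complete metric space $\Cl{\fs{B}_{\uprho}(u_n,R)}$ with a parameter $\varepsilon_n\to 0^{+}$ satisfying $\va(u_n)\le\inf_{\Cl{\fs{B}_{\uprho}(u_n,R)}}\va+\varepsilon_n$ (possible since the excess is at most $\va(u_n)-c\to 0$) and radius $R/2$. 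This yields $m_n$ with $\uprho(m_n,u_n)<R/2$, so that $m_n$ is \emph{interior} to the ball, together with $\va(m_n)\le\va(u_n)$ and the variational inequality $\va(m_n)\le\va(x)+\tfrac{2\varepsilon_n}{R}\,\uprho(m_n,x)$ for all $x$ in the ball.

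Because $m_n$ is interior, every direction $h\in\fs{B}_{\infty}(\zero{m_n})$ can be probed by a short curve issuing from $m_n$ in a chart; using the graded Finsler structure to bound the metric speed of such curves by $1$ for directions in $\fs{B}_{\infty}(\zero{m_n})$, and letting the curve parameter tend to $0$ in the variational inequality, one gets $\va'(m_n,h)\ge-\tfrac{2\varepsilon_n}{R}$, hence $\Phi_{\va}(m_n)\ge-\tfrac{2\varepsilon_n}{R}$. Since $\zero{m_n}\in\fs{B}_{\infty}(\zero{m_n})$ and $\va'(m_n,\zero{m_n})=0$ force $\Phi_{\va}(m_n)\le 0$, I conclude $\Phi_{\va}(m_n)\to 0$, while $c\le\va(m_n)\le\va(u_n)\to c$ gives $\va(m_n)\to c$. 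Thus $(m_n)$ is a $\ps$ sequence lying in $U$; the $\ps$-condition furnishes a convergent subsequence $m_{n_k}\to p$, continuity of $\va$ gives $\va(p)=c$, and the room $\Cl{\fs{B}_{\uprho}(u_n,R)}\subset U$ keeps $p\in U$. By the reduction of the first paragraph, $p$ is the required local minimum, establishing (\ref{eq:tcp2}).

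The step I expect to be the main obstacle is exactly the passage from Ekeland's inequality to $\Phi_{\va}(m_n)\to 0$: Ekeland's estimate is one-sided and compares $\va$ only against the metric, whereas $\Phi_{\va}$ is an infimum over \emph{all} tangent directions. Applying the principle directly on $\Cl{W}$ would leave the outward directions at boundary points $\partial W$ uncontrolled; centring the Ekeland balls at the near-minimisers $u_n$, rather than working on $\Cl{W}$ all at once, is what forces interior points and circumvents the difficulty. Two subsidiary technical points need care: keeping the limit $p$ inside the open set $U$, which requires $\uprho(\Cl{W},\complement U)>0$ (automatic when $\Cl{W}$ is compact or $\Cl{V}\subset U$) so that a uniform $R$ exists; and the chart/Finsler computation converting $\uprho$ into the directional derivative, where the grading $\snorm[\fs{M},n]{\cdot}$ and the membership $h\in\fs{B}_{\infty}(\zero{m_n})$ are used to keep the constant $1$ uniform.
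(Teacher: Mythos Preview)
Your proposal is correct and follows essentially the same route as the paper: negate (\ref{eq:tcp1}), take a minimising sequence near $\partial W$, upgrade it via Ekeland's principle (Theorem~\ref{th:evpsf}) to an almost-critical sequence, and invoke the $\ps$-condition to extract a limit at level $\va(\bl{u})$. The differences are in execution rather than in idea: the paper passes to a single chart at $\bl{u}$ and applies Ekeland on a fixed annular set $\uppsi_{\bl{u}}(\Cl{\ftt{S}\setminus\ftt{S}'})$ (so that the boundary sequence $u_n$ sits in its interior), obtaining $\snorm[B]{\upvarphi'(v_n)}\to 0$ from a Taylor expansion, whereas you stay on the manifold, apply Ekeland on moving $\uprho$-balls $\Cl{\fs{B}_{\uprho}(u_n,R)}$ centred at the near-minimisers, and read off $\Phi_{\va}(m_n)\to 0$ directly from the Finsler bound on curve speed. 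Your observation that any point $p\in U$ with $\va(p)=\va(\bl{u})$ is automatically a local minimum is a genuine simplification over the paper's explicit verification of $\upvarphi'(c_W)=0$. The technical caveats you flag (a uniform $R$ with $\Cl{\fs{B}_{\uprho}(\Cl{W},R)}\subset U$, and the passage from the one-sided Ekeland inequality to a bound on $\Phi_{\va}$) are the same delicate points the paper's chart argument must also negotiate.
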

\begin{proof}
	Suppose $  V \subset U $ is given and \eqref{eq:tcp1} is not valid we shall prove \eqref{eq:tcp2}.
	
	Let $ (\uu_{\bl{u}}, \uppsi_{\bl{u}}) $ be a chart at $ \bl{u} $ such that $ \uu_{\bl{u}} \medcap V \neq \emptyset $, and let $ \upvarphi \coloneq \va \circ \uppsi_{\bl{u}}^{-1} $ be the local representative of $ \va $ in this chart. Thus, for any given $ W \subset V $ such that $\bl{W} \coloneq W \medcap \uu_{\bl{u}} \neq \emptyset $ we have 
	\begin{equation}\label{e:tcpt3}
	\inf_{u \in \partial \uppsi_{\bl{u}}(\bl{W})} \upvarphi(u) = \upvarphi(\bl{u}).
	\end{equation}
	Let $ \ftt{S}, \ftt{S'} $ be distinct sets such that $ \ftt{S} \subset V,\, \ftt{S}' \subset W $ and $ \Cl{\ftt{S} \setminus \ftt{S}'} \medcap \uu_{\bl{u}} \neq \emptyset $.
	In virtue of \eqref{e:tcpt3} we can find a sequence $ (u_n)_{n \in \nn} \subset \partial \uppsi_{\bl{u}}(\bl{W})$ 
	such that 
	\begin{equation}\label{eq:tcpt4}
	\upvarphi(u_n) \leq \upvarphi(\bl{u}) + \dfrac{1}{n}.
	\end{equation}
	The restriction of $ \upvarphi $ to $ \uppsi_{\bl{u}} \big(  \Cl{\ftt{S} \setminus \ftt{S}'}  \big) $
	satisfies all the assumption of Theorem \ref{th:evpsf}, therefore, there is a sequence 
	$ (v_n)_{n \in \nn} \subset  \uppsi_{\bl{u}} \big(  \Cl{\ftt{S} \setminus \ftt{S}'}  \big) $ such that
	\begin{enumerate}[label=\textbf{(E\arabic*)},ref=E\arabic*]
		\item  \label{eq:eks4} $ \upvarphi(v_n) \leq \upvarphi (u_n)$;
		\item  \label{eq:eks5} $\mt (u_n,v_n) < \dfrac{1}{n}$;
		\item  \label{eq:eks6} $\upvarphi(v_n) < \upvarphi(x) + \dfrac{1}{n} \mt(v_n,x) \quad \forall x \in \uppsi_{\bl{u}} \big(  \Cl{\ftt{S} \setminus \ftt{S}'}  \big).$
	\end{enumerate}	
It follows from \eqref{eq:eks5}	that $ (v_n)\subset \Int \uppsi_{\bl{u}} \big(  \Cl{\ftt{S} \setminus \ftt{S}'}  \big) $
for sufficiently large $ n $. In \eqref{eq:eks6}, let $ x = v_n + tb $ for sufficiently small $ t $ and $ b \in \bl{B}_{\infty}(\zero{F}) $. By Taylor's expatiation formula  of $ \upvarphi (v_n +tb) $ about $ v_n $ (\cite[Theorem 1.4.A]{ke}), and letting
$ t \to 0 $ we obtain
\begin{equation}\label{eq:tcp7}
\snorm[B]{\upvarphi(v_n)} \leq \dfrac{1}{n} \quad \forall B \in \bb.
\end{equation}
Thus, along with \eqref{eq:eks3} and the (PS)-condition for $ \upvarphi $ there exists a subsequence of $ (v_n)_{n \in \nn} $, denoted by $ (w_n)_{n \in \nn} $, such that $ w_n \to c_{W} $ for some point $ c_W \in  \partial \uppsi_{\bl{u}}(\bl{W})$. Whence, $ \upvarphi(c_w) = \upvarphi(\bl{u}), \upvarphi'(c_W) =0 $. The chain rule then completes the proof.  	
\end{proof}
\begin{theorem}[Three Critical Points Theorem]\label{co:tcpt}
	Let $ \fs{M}$ be a connected Fr\'{e}chet manifold and $ \va : \fs{M}  \rightarrow \rr $ a closed $ C^1 $- functional satisfying the Palais-Smale condition at all levels. If $ \va $ has two minima, then $ \va $ has one more critical point.
\end{theorem}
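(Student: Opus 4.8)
The plan is to deduce the statement from the mountain pass theorem (Theorem~\ref{th:mpt}) by running the dichotomy of Lemma~\ref{l:1} at the \emph{higher} of the two minima. Label the two local minima $p$ and $q$, and after relabelling assume $\va(p)\le\va(q)$. The governing observation is that if one surrounds $q$ rather than $p$, the separation hypothesis (M1) becomes essentially free: since $q$ is a local minimum, $\va\ge\va(q)$ on a whole neighbourhood of $q$, so on every small sphere about $q$ one has $\inf\va\ge\va(q)=\max\set{\va(p),\va(q)}$. Thus the only genuine issue is whether this inequality can be made \emph{strict}, and it is precisely this that Lemma~\ref{l:1} settles.

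I would first choose a neighbourhood $U$ of $q$ with $\va(q)\le\va(u)$ for all $u\in U$ (which exists because $q$ is a local minimum), and then an open set $V\subset U$ with $q\in V$ and $p\notin\Cl{V}$ (possible since $p\ne q$). Applying Lemma~\ref{l:1} with $\bl{u}=q$ produces two alternatives. If \eqref{eq:tcp2} holds, I take $W\subset V$ to be a small ball centred at $q$; the lemma then furnishes a point $c_W$ on the boundary of $W$ at which $\va$ attains a local minimum with $\va(c_W)=\va(q)$. Being a boundary point of a ball about $q$ it is different from $q$, and since $c_W\in\Cl{V}$ while $p\notin\Cl{V}$ it is different from $p$; as a local minimum $c_W$ is a critical point, hence the desired third one.

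If instead \eqref{eq:tcp1} holds, there is an open $W\subset V$ with $q\in W$ and $\inf_{\partial W}\va>\va(q)$. I would then apply Theorem~\ref{th:mpt} with base point $q\in W$, target $p\notin\Cl{V}\supseteq\Cl{W}$ and neighbourhood $W$. Because
\[
\max\set{\va(q),\va(p)}=\va(q)<\inf_{\partial W}\va,
\]
hypothesis (M1) holds with strict inequality, so Remark~\ref{rk:mpt} produces a critical point $x_3$ with $x_3\ne q$ and $x_3\ne p$. In either alternative $\va$ acquires a third critical point, completing the argument.

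The main obstacle is not conceptual but lies in the bookkeeping of the two alternatives. In the strict case one must be sure that the set $W$ supplied by \eqref{eq:tcp1} actually contains $q$ in its interior, so that $q$ can serve as the interior base point of Theorem~\ref{th:mpt} and every path from $q$ to $p$ is forced across $\partial W$; this is arranged by keeping $p$ outside $\Cl{V}$ from the start and shrinking $V$ tightly around $q$. In the degenerate case one must check that the local minima $c_W$ are genuinely distinct from $p$ and $q$ rather than recovering the minima themselves, which is why it is essential to read off from Lemma~\ref{l:1} that $c_W$ lies on $\partial W$. The single decisive idea behind the whole proof is the choice to centre the construction at the higher minimum $q$, since this is what turns (M1) into an automatic consequence of $q$ being a local minimum.
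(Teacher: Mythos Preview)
Your proposal is correct and follows precisely the route the paper takes: the paper's own proof is the single sentence ``It follows from Theorem~\ref{th:mpt} and Lemma~\ref{l:1}'', and your argument is a faithful expansion of that line, running the dichotomy of Lemma~\ref{l:1} at the higher minimum and feeding alternative~\eqref{eq:tcp1} into the mountain pass theorem (via Remark~\ref{rk:mpt}) while reading off from the \emph{proof} of Lemma~\ref{l:1} that the points $c_W$ in alternative~\eqref{eq:tcp2} lie on $\partial W$. The one point you flag---that the set $W$ produced in~\eqref{eq:tcp1} must actually contain $q$---is a genuine bookkeeping issue, but the paper's formulation of Lemma~\ref{l:1} is equally silent on it, so your treatment is at the same level of rigor as the source.
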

\begin{proof}
	It follows from Theorem~\ref{th:mpt} and Lemma~\ref{l:1}.
\end{proof}
\bibliographystyle{amsplain}

\begin{thebibliography}{10}
\bibitem{k1}
K. Eftekharinasab, \emph{On the existence of a global diffeomorphism between Fr\'echet spaces}, 
Methods of Functional Analysis and Topology, Vol. 28, No. 1 (2020) 68-75. 
DOI:10.31392/MFAT-npu$ 26_{-}1.2020.05 $.		 
\bibitem{k2}
K. Eftekharinasab, \emph{A Global Diffeomorphism Theorem for Fr\'echet spaces}, 
J. Math. Sci, Vol. 247 (2020) 276-290. https://doi.org/10.1007/s10958-020-04802-4.
\bibitem{k3}
K. Eftekharinasab, \emph{A Generalized Palais-Smale Condition in the Fr\'{e}chet space setting},
Proceedings of the International Geometry Center, Vol.~11, No.~1 (2018) 1-11. DOI:10.15673/tmgc.v11i1.915.
\bibitem{k}
K. Eftekharinasab and I. Lastivka, \emph{A Lusternik-Schnirelmann  type theorem for $C^1$-Fr\'{e}chet manifolds},
 J.~Indian Math.~Soc.,
Vol.~88., Nos(3-4)(2021), 309-322. DOI: 10.18311/jims/2021/27849.
\bibitem{hir}
D. Hyers, G. Isac, and T. Rassias, \emph{Topics in Nonlinear Analysis and Applications}, World Scientific, 1997.
\bibitem{ke}
H. Keller, \emph{Differential calculus in locally convex spaces}. Berlin: Springer-Verlag, 1974.	
\bibitem{sn1}
T. N. Subramaniam, \emph{Slices for the actions of smooth tame Lie groups},
PhD thesis, Brandeis University, 1984.
\bibitem{l}
M.~Schechter, \emph{Linking Methods in Critical Point Theory}, Birkh\"{a}user Basel, 1999.

\end{thebibliography}

\end{document}